%
\documentclass[12pt,letterpaper,final,twoside,leqno]{amsart}
\usepackage{amsmath,amssymb,amsthm,amsfonts,amscd,amsopn}
\usepackage{hyperref}
\usepackage{eucal, mathrsfs}
\usepackage{amsmath,amssymb,amsthm,amsfonts,amscd,amsopn}
\usepackage{hyperref}
\usepackage{xspace}
\usepackage[all]{xy}\xyoption{dvips}
\usepackage{comment} 
\usepackage{paralist}
\usepackage{stmaryrd}
\usepackage{enumitem}
\usepackage{supertabular}
\binoppenalty=10000
\relpenalty=10000

%
\DeclareMathAlphabet{\smallchanc}{OT1}{pzc}%
                                 {m}{it}
\DeclareFontFamily{OT1}{pzc}{}
\DeclareFontShape{OT1}{pzc}{m}{it}%
             {<-> s * [1.100] pzcmi7t}{}
\DeclareMathAlphabet{\mathchanc}{OT1}{pzc}%
                                 {m}{it}

\newcommand{\mcE}{\mathchanc{E}}

\newcommand{\mcH}{\mathchanc{H}}

\newcommand{\mcR}{\mathchanc{R}}


\newcommand{\mcm}{\mathchanc{m}}

\newcommand{\mco}{\mathchanc{o}}

\newcommand{\mct}{\mathchanc{t}}

\newcommand{\mcx}{\mathchanc{x}}

\DeclareFontFamily{OMS}{rsfs}{\skewchar\font'60}
\DeclareFontShape{OMS}{rsfs}{m}{n}{<-5>rsfs5 <5-7>rsfs7 <7->rsfs10 }{}
\DeclareSymbolFont{rsfs}{OMS}{rsfs}{m}{n}
\DeclareSymbolFontAlphabet{\scr}{rsfs}


\newcommand{\sF}{\scr{F}}
\newcommand{\sG}{\scr{G}}
\newcommand{\sH}{\scr{H}}
\newcommand{\sI}{\scr{I}}

\newcommand{\sL}{\scr{L}}

\newcommand{\sO}{\scr{O}}

\newcommand{\sfA}{{\sf A}}
\newcommand{\sfB}{{\sf B}}


\newcommand{\bC}{\mathbb{C}}

\newcommand{\bH}{\mathbb{H}}

\newcommand{\bN}{\mathbb{N}}

\newcommand{\bQ}{\mathbb{Q}}

\def\frm{\mathfrak{m}}



\newcommand{\ol}{\overline}

\newcommand{\into}{\hookrightarrow}

\newcommand{\wt}{\widetilde}

\newcommand{\rup}[1]{\left\lceil{#1}\right\rceil}
\newcommand{\rdown}[1]{\left\lfloor{#1}\right\rfloor}


\newcommand{\leteq}{\colon\!\!\!=}
\newcommand{\col}{\colon}





\newcommand{\ratmap}{\dasharrow}


\DeclareMathOperator{\codim}{codim}

\DeclareMathOperator{\depth}{{depth}}

\DeclareMathOperator{\exc}{Exc}
\DeclareMathOperator{\excnklt}{Exc_{nklt}}

\newcommand{\sExt}[0]{{\mcE\mcx\mct}}

\newcommand{\sHom}[0]{{\mcH\mco\mcm}}

\DeclareMathOperator{\red}{red}

\DeclareMathOperator{\Sing}{{Sing}}

\DeclareMathOperator{\Spec}{{Spec}}
\DeclareMathOperator{\supp}{{supp}}


\newcommand{\factor}[2]{\left. \raise 2pt\hbox{\ensuremath{#1}} \right/
        \hskip -2pt\raise -2pt\hbox{\ensuremath{#2}}}



\newcommand{\myR}{{\mcR\!}}


\newcommand{\blank}{\underline{\hskip 10pt}}

%


\newcommand{\kdot}{{{\,\begin{picture}(1,1)(-1,-2)\circle*{2}\end{picture}\ }}}
\newcommand{\mydot}{\kdot}

\newcommand{\cx}{\sf}

\def\coh#1.#2.#3.{H^{#1}(#2,#3)}
\def\dimcoh#1.#2.#3.{h^{#1}(#2,#3)}
\def\hypcoh#1.#2.#3.{\mathbb H_{\vphantom{l}}^{#1}(#2,#3)}
\def\loccoh#1.#2.#3.#4.{H^{#1}_{#2}(#3,#4)}
\def\dimloccoh#1.#2.#3.#4.{h^{#1}_{#2}(#3,#4)}
\def\lochypcoh#1.#2.#3.#4.{\mathbb H^{#1}_{#2}(#3,#4)}
\def\ses#1.#2.#3.{0  \longrightarrow  #1   \longrightarrow 
 #2 \longrightarrow #3 \longrightarrow 0} 
\def\sesshort#1.#2.#3.{0
 \rightarrow #1 \rightarrow #2 \rightarrow #3 \rightarrow 0}
\def\dist#1.#2.#3.{  #1   \longrightarrow 
 #2 \longrightarrow #3 \stackrel{+1}{\longrightarrow} } 
\def\CDdist#1.#2.#3.{  #1   @>>>  #2  @>>>   #3 @>+1>> }  
\def\shortses#1.#2.#3.{0  \rightarrow  #1   \rightarrow 
 #2  \rightarrow   #3 \rightarrow  0}
\def\shortdist#1.#2.#3.{  #1   \rightarrow 
 #2  \rightarrow   #3 \stackrel{+1}{\rightarrow} }  
\def\ddist#1.#2.#3.#4.#5.#6.{\CD
#1 @>>> #2 @>>> #3 @>+1>> \\
@VVV @VVV @VVV \\
#4 @>>> #5 @>>> #6 @>+1>> 
\endCD}
\def\ddistun#1.#2.#3.#4.#5.#6.{\CD
#1 @>>> #2 @>>> #3 @>+1>> \\
@. @VVV @VVV  \\
#4 @>>> #5 @>>> #6 @>+1>> 
\endCD}
\def\Iff#1#2#3{
\hfil\hbox{\hsize =#1
\vtop{\noin #2}
\hskip.5cm 
\lower.5\baselineskip\hbox{$\Leftrightarrow$}\hskip.5cm
\vtop{\noin #3}}\hfil\medskip}
\newcommand{\union}\cup
\newcommand{\intersect}\cap
\newcommand{\Union}\bigcup
\newcommand{\Intersect}\bigcap
\def\myoplus#1.#2.{\underset #1 \to {\overset #2 \to \oplus}}

\newcommand{\resto}{\big\vert_}

\def\qis{\,{\simeq}_{\text{qis}}\,}


\newcommand{\mypagesize}{
\textwidth= 6.25in
\textheight=8.75in
\voffset-.5in
\hoffset-.75in
\marginparwidth=56pt
}
\mypagesize

\begin{document}
\makeatletter
\newenvironment{refmr}{}{}
\renewcommand{\labelenumi}{{\rm (\thethm.\arabic{enumi})}}
%
\setitemize[1]{leftmargin=*,parsep=0em,itemsep=0.125em,topsep=0.125em}
\newcommand\james{M\raise .575ex \hbox{\text{c}}Kernan}

\renewcommand\thesubsection{\thesection.\Alph{subsection}}
\renewcommand\subsection{
  \renewcommand{\sfdefault}{pag}
  \@startsection{subsection}%
  {2}{0pt}{-\baselineskip}{.2\baselineskip}{\raggedright
    \sffamily\itshape\small
  }}
\renewcommand\section{
  \renewcommand{\sfdefault}{phv}
  \@startsection{section} %
  {1}{0pt}{\baselineskip}{.2\baselineskip}{\centering
    \sffamily
    \scshape
}}
\newcounter{lastyear}\setcounter{lastyear}{\the\year}
\addtocounter{lastyear}{-1}
\newcommand\sideremark[1]{%
\normalmarginpar
\marginpar
[
\hskip .45in
\begin{minipage}{.75in}
\tiny #1
\end{minipage}
]
{
\hskip -.075in
\begin{minipage}{.75in}
\tiny #1
\end{minipage}
}}
\newcommand\rsideremark[1]{
\reversemarginpar
\marginpar
[
\hskip .45in
\begin{minipage}{.75in}
\tiny #1
\end{minipage}
]
{
\hskip -.075in
\begin{minipage}{.75in}
\tiny #1
\end{minipage}
}}
\newcommand\Index[1]{{#1}\index{#1}}
\newcommand\inddef[1]{\emph{#1}\index{#1}}
\newcommand\noin{\noindent}
\newcommand\hugeskip{\bigskip\bigskip\bigskip}
\newcommand\smc{\sc}
\newcommand\dsize{\displaystyle}
\newcommand\sh{\subheading}
\newcommand\nl{\newline}
\newcommand\input /home/kovacs/tex/latex/{\input /home/kovacs/tex/latex/} 
\newcommand\Get{\Input /home/kovacs/tex/latex/} 
\newcommand\toappear{\rm (to appear)}
\newcommand\mycite[1]{[#1]}
\newcommand\myref[1]{(\ref{#1})}
\newcommand\myli{\hfill\newline\smallskip\noindent{$\bullet$}\quad}
\newcommand\vol[1]{{\bf #1}\ } 
\newcommand\yr[1]{\rm (#1)\ } 
\newcommand\cf{cf.\ \cite}
\newcommand\mycf{cf.\ \mycite}
\newcommand\te{there exist}
\newcommand\st{such that}
\newcommand\myskip{3pt}
\newtheoremstyle{bozont}{3pt}{3pt}%
     {\itshape}
     {}
     {\bfseries}
     {.}
     {.5em}
     {\thmname{#1}\thmnumber{ #2}\thmnote{ \rm #3}}
\newtheoremstyle{bozont-sf}{3pt}{3pt}%
     {\itshape}
     {}
     {\sffamily}
     {.}
     {.5em}
     {\thmname{#1}\thmnumber{ #2}\thmnote{ \rm #3}}
\newtheoremstyle{bozont-sc}{3pt}{3pt}%
     {\itshape}
     {}
     {\scshape}
     {.}
     {.5em}
     {\thmname{#1}\thmnumber{ #2}\thmnote{ \rm #3}}
\newtheoremstyle{bozont-remark}{3pt}{3pt}%
     {}
     {}
     {\scshape}
     {.}
     {.5em}
     {\thmname{#1}\thmnumber{ #2}\thmnote{ \rm #3}}
\newtheoremstyle{bozont-def}{3pt}{3pt}%
     {}
     {}
     {\bfseries}
     {.}
     {.5em}
     {\thmname{#1}\thmnumber{ #2}\thmnote{ \rm #3}}
\newtheoremstyle{bozont-reverse}{3pt}{3pt}%
     {\itshape}
     {}
     {\bfseries}
     {.}
     {.5em}
     {\thmnumber{#2.}\thmname{ #1}\thmnote{ \rm #3}}
\newtheoremstyle{bozont-reverse-sc}{3pt}{3pt}%
     {\itshape}
     {}
     {\scshape}
     {.}
     {.5em}
     {\thmnumber{#2.}\thmname{ #1}\thmnote{ \rm #3}}
\newtheoremstyle{bozont-reverse-sf}{3pt}{3pt}%
     {\itshape}
     {}
     {\sffamily}
     {.}
     {.5em}
     {\thmnumber{#2.}\thmname{ #1}\thmnote{ \rm #3}}
\newtheoremstyle{bozont-remark-reverse}{3pt}{3pt}%
     {}
     {}
     {\sc}
     {.}
     {.5em}
     {\thmnumber{#2.}\thmname{ #1}\thmnote{ \rm #3}}
\newtheoremstyle{bozont-def-reverse}{3pt}{3pt}%
     {}
     {}
     {\bfseries}
     {.}
     {.5em}
     {\thmnumber{#2.}\thmname{ #1}\thmnote{ \rm #3}}
\newtheoremstyle{bozont-def-newnum-reverse}{3pt}{3pt}%
     {}
     {}
     {\bfseries}
     {}
     {.5em}
     {\thmnumber{#2.}\thmname{ #1}\thmnote{ \rm #3}}
\theoremstyle{bozont}    
  \newtheorem{proclaim}{Theorem}[section]
\newtheorem{thm}[proclaim]{Theorem}
\newtheorem{mainthm}[proclaim]{Main Theorem}
\newtheorem{cor}[proclaim]{Corollary} 
\newtheorem{cors}[proclaim]{Corollaries} 
\newtheorem{lem}[proclaim]{Lemma} 
\newtheorem{prop}[proclaim]{Proposition} 
\newtheorem{conj}[proclaim]{Conjecture}
\newtheorem{subproclaim}[equation]{Theorem}
\newtheorem{subthm}[equation]{Theorem}
\newtheorem{subcor}[equation]{Corollary} 
\newtheorem{sublem}[equation]{Lemma} 
\newtheorem{subprop}[equation]{Proposition} 
\newtheorem{subconj}[equation]{Conjecture}
\theoremstyle{bozont-sc}
\newtheorem{proclaim-special}[proclaim]{\specialthmname}
\newenvironment{proclaimspecial}[1]
     {\def\specialthmname{#1}\begin{proclaim-special}}
     {\end{proclaim-special}}
\theoremstyle{bozont-remark}
\newtheorem{rem}[proclaim]{Remark}
\newtheorem{subrem}[equation]{Remark}
\newtheorem{notation}[proclaim]{Notation} 
\newtheorem{assume}[proclaim]{Assumptions} 
\newtheorem{obs}[proclaim]{Observation} 
\newtheorem{example}[proclaim]{Example} 
\newtheorem{examples}[proclaim]{Examples} 
\newtheorem{complem}[equation]{Complement}
\newtheorem{const}[proclaim]{Construction}   
\newtheorem{ex}[proclaim]{Exercise} 
\newtheorem{subnotation}[equation]{Notation} 
\newtheorem{subassume}[equation]{Assumptions} 
\newtheorem{subobs}[equation]{Observation} 
\newtheorem{subexample}[equation]{Example} 
\newtheorem{subex}[equation]{Exercise} 
\newtheorem{claim}[proclaim]{Claim} 
\newtheorem{inclaim}[equation]{Claim} 
\newtheorem{subclaim}[equation]{Claim} 
\newtheorem{case}{Case} 
\newtheorem{subcase}{Subcase}   
\newtheorem{step}{Step}
\newtheorem{approach}{Approach}
\newtheorem{fact}{Fact}
\newtheorem{subsay}{}
\newtheorem*{SubHeading*}{\SubHeadingName}%
\newtheorem{SubHeading}[proclaim]{\SubHeadingName}
\newtheorem{sSubHeading}[equation]{\sSubHeadingName}
\newenvironment{demo}[1] {\def\SubHeadingName{#1}\begin{SubHeading}}
  {\end{SubHeading}}%
\newenvironment{subdemo}[1]{\def\sSubHeadingName{#1}\begin{sSubHeading}}
  {\end{sSubHeading}} %
\newenvironment{demo-r}[1]{\def\SubHeadingName{#1}\begin{SubHeading-r}}
  {\end{SubHeading-r}}%
\newenvironment{subdemo-r}[1]{\def\sSubHeadingName{#1}\begin{sSubHeading-r}}
  {\end{sSubHeading-r}} %
\newenvironment{demo*}[1]{\def\SubHeadingName{#1}\begin{SubHeading*}}
  {\end{SubHeading*}}%
\newtheorem{defini}[proclaim]{Definition}
\newtheorem{question}[proclaim]{Question}
\newtheorem{subquestion}[equation]{Question}
\newtheorem{crit}[proclaim]{Criterion}
\newtheorem{pitfall}[proclaim]{Pitfall}
\newtheorem{addition}[proclaim]{Addition}
\newtheorem{principle}[proclaim]{Principle} 
\newtheorem{condition}[proclaim]{Condition}
\newtheorem{say}[proclaim]{}
\newtheorem{exmp}[proclaim]{Example}
\newtheorem{hint}[proclaim]{Hint}
\newtheorem{exrc}[proclaim]{Exercise}
\newtheorem{prob}[proclaim]{Problem}
\newtheorem{ques}[proclaim]{Question}    
\newtheorem{alg}[proclaim]{Algorithm}
\newtheorem{remk}[proclaim]{Remark}          
\newtheorem{note}[proclaim]{Note}            
\newtheorem{summ}[proclaim]{Summary}         
\newtheorem{notationk}[proclaim]{Notation}   
\newtheorem{warning}[proclaim]{Warning}  
\newtheorem{defn-thm}[proclaim]{Definition--Theorem}  
\newtheorem{convention}[proclaim]{Convention}  
\newtheorem*{ack}{Acknowledgment}
\newtheorem*{acks}{Acknowledgments}
\theoremstyle{bozont-def}    
\newtheorem{defn}[proclaim]{Definition}
\newtheorem{subdefn}[equation]{Definition}
\theoremstyle{bozont-reverse}    
\newtheorem{corr}[proclaim]{Corollary} 
\newtheorem{lemr}[proclaim]{Lemma} 
\newtheorem{propr}[proclaim]{Proposition} 
\newtheorem{conjr}[proclaim]{Conjecture}
\theoremstyle{bozont-reverse-sc}
\newtheorem{proclaimr-special}[proclaim]{\specialthmname}
\newenvironment{proclaimspecialr}[1]%
{\def\specialthmname{#1}\begin{proclaimr-special}}%
{\end{proclaimr-special}}
\theoremstyle{bozont-remark-reverse}
\newtheorem{remr}[proclaim]{Remark}
\newtheorem{subremr}[equation]{Remark}
\newtheorem{notationr}[proclaim]{Notation} 
\newtheorem{assumer}[proclaim]{Assumptions} 
\newtheorem{obsr}[proclaim]{Observation} 
\newtheorem{exampler}[proclaim]{Example} 
\newtheorem{exr}[proclaim]{Exercise} 
\newtheorem{claimr}[proclaim]{Claim} 
\newtheorem{inclaimr}[equation]{Claim} 
\newtheorem{SubHeading-r}[proclaim]{\SubHeadingName}
\newtheorem{sSubHeading-r}[equation]{\sSubHeadingName}
\newtheorem{SubHeadingr}[proclaim]{\SubHeadingName}
\newtheorem{sSubHeadingr}[equation]{\sSubHeadingName}
\newenvironment{demor}[1]{\def\SubHeadingName{#1}\begin{SubHeadingr}}{\end{SubHeadingr}}
\newtheorem{definir}[proclaim]{Definition}
\theoremstyle{bozont-def-newnum-reverse}    
\newtheorem{newnumr}[proclaim]{}
\theoremstyle{bozont-def-reverse}    
\newtheorem{defnr}[proclaim]{Definition}
\newtheorem{questionr}[proclaim]{Question}
\newtheorem{newnumspecial}[proclaim]{\specialnewnumname}
\newenvironment{newnum}[1]{\def\specialnewnumname{#1}\begin{newnumspecial}}{\end{newnumspecial}}
\numberwithin{equation}{proclaim}
\numberwithin{figure}{section} 
\newcommand\equinsect{\numberwithin{equation}{section}}
\newcommand\equinthm{\numberwithin{equation}{proclaim}}
\newcommand\figinthm{\numberwithin{figure}{proclaim}}
\newcommand\figinsect{\numberwithin{figure}{section}}
\newenvironment{sequation}{%
\numberwithin{equation}{section}%
\begin{equation}%
}{%
\end{equation}%
\numberwithin{equation}{thm}%
\addtocounter{thm}{1}%
}
\newcommand{\num}{\arabic{section}.\arabic{proclaim}}
\newenvironment{pf}{\smallskip \noindent {\sc Proof. }}{\qed\smallskip}
\newenvironment{enumerate-p}{
  \begin{enumerate}}
  {\setcounter{equation}{\value{enumi}}\end{enumerate}}
\newenvironment{enumerate-cont}{
  \begin{enumerate}
    {\setcounter{enumi}{\value{equation}}}}
  {\setcounter{equation}{\value{enumi}}
  \end{enumerate}}
\let\lenumi\labelenumi
\newcommand{\rmlabels}{\renewcommand{\labelenumi}{\rm \lenumi}}
\newcommand{\rmlabelsoff}{\renewcommand{\labelenumi}{\lenumi}}
\newenvironment{heading}{\begin{center} \sc}{\end{center}}
\newcommand\subheading[1]{\smallskip\noindent{{\bf #1.}\ }}
\newlength{\swidth}
\setlength{\swidth}{\textwidth}
\addtolength{\swidth}{-,5\parindent}
\newenvironment{narrow}{
  \medskip\noindent\hfill\begin{minipage}{\swidth}}
  {\end{minipage}\medskip}
\newcommand\nospace{\hskip-.45ex}
\makeatother

\begin{abstract}
  Irrational centers are defined analogously to associated primes. The union of
  irrational centers is the locus of non-rational singularities, but irrational
  centers carry more information. There may be embedded irrational centers signifying
  more complicated singularities. Various results regarding irrational centers are
  proved, in particular some concerning depth estimates and the Cohen-Macaulayness of
  certain ideal sheaves.  It is also proved that absolute irrational centers of a log
  canonical pair are also non-klt centers. This allows applying results proved for
  irrational centers for non-klt centers of log canonical pairs.
\end{abstract}

\title{Irrational centers}
\author{S\'andor J Kov\'acs}
\date{\today}
\thanks{Supported in part by NSF Grant DMS-0856185, and the Craig McKibben and Sarah
  Merner Endowed Professorship in Mathematics at the University of Washington.}
\address{University of Washington, Department of Mathematics, Box 354350,
Seattle, WA 98195-4350, USA} 
\email{skovacs@uw.edu}
\urladdr{http://www.math.washington.edu/$\sim$kovacs}
\maketitle
\newcommand{\szabores}{Szab\'o-resolution\xspace}
\newcommand{\DD}{D}

\centerline{\sf In memoriam Eckart Viehweg}

\bigskip\bigskip

\section{Introduction}

\noindent
Rational singularities form one of the most important classes of singularities.
Their essence lies in the fact that their cohomological behavior is very similar to
that of smooth points. For instance, vanishing theorems can be easily extended to
varieties with rational singularities.  Establishing that a certain class of
singularities is rational opens the door to using very powerful tools on varieties
with those singularities.

The main purpose of the present article is to get a handle on determining how far a
non-rational singularity is from being rational, or in other words, introduce a
measure of the failure of a singularity being rational.

Recently there has been an effort to extend the notion of rational singularities to
pairs. There are at least two approaches; Schwede and Takagi \cite{MR2492473} are
dealing with pairs $(X,\Delta)$ where $\rdown{\Delta}=0$ while Koll\'ar and Kov\'acs
\cite{KollarKovacsRP} are studying pairs $(X,\Delta)$ where $\Delta$ is reduced.  I
will work with pairs and concentrate on the latter approach, but the results are
interesting already in the classical case and should be easily adjustable to fit the
setup of the former approach.

I will introduce and start developing the notion of \emph{irrational centers} (or
\emph{non-rational centers}). These are special subvarieties of the singular locus
that are one way or another ``responsible'' for he failure of the singularity to be
rational. After having finished this article I was informed that Alexeev and Hacon
has introduced a similar notion in \cite{AH09}. The definition given here reduces to
their definition in the case $\Delta=0$. Some of their results are similar to the
ones in the present article, but their methods are different from those applied here.

The behaviour of irrational centers is very similar to that of \emph{non-klt
  centers}.  In fact, I will show that Koll\'ar's recent results
\cite{Kollar_local_KV} (cf.\ \cite{MR2435844}, \cite{Fujinobook}) concerning depth of
ideal sheaves of unions of non-klt centers has a reasonably close analogue for
irrational centers. In particular, I will prove the following results (for the
relevant definitions, see \S\ref{sec:pairs-gener-pairs} and
\S\ref{sec:irrational-centers}):

\begin{thm}[(= Corollary~\ref{cor:main-thm-1})]\label{thm:main-1}
  Let $(X,\DD)$ be a rational pair. Then $\sO_X(-D)$ is a CM sheaf.
\end{thm}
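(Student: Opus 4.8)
The plan is to mimic the classical proof that a variety with rational singularities is Cohen--Macaulay, carrying the boundary $\DD$ along through Grothendieck duality. Fix a log resolution $f\colon Y\to X$ of the pair and let $D_Y$ be the divisor on $Y$ appearing in the definition of a rational pair, i.e.\ the strict (birational) transform of $\DD$; then $D_Y$ is reduced and SNC, and by the rationality hypothesis the natural map is a quasi-isomorphism
\[
\sO_X(-\DD)\;\simeq\;\myR f_*\,\sO_Y(-D_Y).
\]

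Write $n=\dim X=\dim Y$ and let $\omega_X^\bullet$ denote the normalized dualizing complex. First I would apply Grothendieck duality for the proper birational morphism $f$. Since $Y$ is smooth, $\omega_Y^\bullet=\omega_Y[n]$ and $f^!\omega_X^\bullet=\omega_Y^\bullet$; because $\sO_Y(-D_Y)$ is a line bundle, $\myR\sHom_Y(\sO_Y(-D_Y),\omega_Y[n])=\omega_Y(D_Y)[n]$. Duality therefore yields
\[
\myR\sHom_X\!\bigl(\myR f_*\sO_Y(-D_Y),\,\omega_X^\bullet\bigr)\;\simeq\;\myR f_*\bigl(\omega_Y(D_Y)\bigr)[n],
\]
and substituting the rationality quasi-isomorphism on the left gives $\myR\sHom_X(\sO_X(-\DD),\omega_X^\bullet)\simeq \myR f_*\bigl(\omega_Y(D_Y)\bigr)[n]$.

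The crux is then a vanishing statement on the resolution: $R^i f_*\,\omega_Y(D_Y)=0$ for all $i>0$. As $D_Y$ is reduced and SNC we have $\omega_Y(D_Y)=\Omega_Y^{\,n}(\log D_Y)$, so this is exactly the logarithmic Grauert--Riemenschneider vanishing. It is essential here that $D_Y$ is the \emph{strict} transform, hence contains no $f$-exceptional component: for boundaries that do contain exceptional divisors the vanishing genuinely fails (already $R^1 f_*\omega_Y(E)\neq 0$ for the exceptional curve $E$ of a blown-up surface), so this non-exceptionality is the hypothesis one must track carefully. Granting the vanishing, $\myR f_*\bigl(\omega_Y(D_Y)\bigr)=f_*\omega_Y(D_Y)$ is a single sheaf in cohomological degree $0$, so the right-hand side is concentrated in degree $-n$.

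Consequently $\sExt_X^i(\sO_X(-\DD),\omega_X^\bullet)=0$ for every $i\neq -n$; equivalently, the dual of $\sO_X(-\DD)$ with respect to $\omega_X^\bullet$ is a single sheaf placed in degree $-n=-\dim X$. By the standard local-duality characterization of depth this says precisely that $\sO_X(-\DD)$ is a Cohen--Macaulay sheaf, which is the assertion. I expect the only real obstacle to be the vanishing $R^{>0}f_*\omega_Y(D_Y)=0$ together with the bookkeeping guaranteeing that $D_Y$ carries no exceptional components so that it applies; when $\DD=0$ this reduces to ordinary Grauert--Riemenschneider and recovers the familiar implication that rational singularities are Cohen--Macaulay.
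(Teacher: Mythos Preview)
Your proof is correct and follows essentially the same route as the paper: apply Grothendieck duality to the rationality quasi-isomorphism to get $\myR\sHom_X(\sO_X(-\DD),\omega_X^\mydot)\qis\myR f_*\omega_Y(D_Y)[n]$, invoke the log Grauert--Riemenschneider vanishing (the paper's Theorem~\ref{thm:rel-kv-vanish}, where your observation that $D_Y$ has no exceptional components is exactly the content of its Remark), and conclude CM-ness from the resulting $\sExt$-vanishing via the local-duality depth criterion (the paper's Corollary~\ref{cor:CM-criterion}). The only cosmetic difference is that the paper packages the last step as a stated corollary rather than appealing directly to local duality.
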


\begin{thm}[(= Corollary~\ref{thm:appl--theorem})]\label{thm:main-2}
  Let $(X,\DD)$ be a normal pair and $x\in X$ which is not the general point of an
  absolute irrational center of $(X,\DD)$. Then
  $$
  \depth_x\sO_X(-\DD)\geq \min ( 3, \codim_X x ). 
  $$
\end{thm}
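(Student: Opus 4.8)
The plan is to translate the depth estimate into the vanishing of local cohomology and to feed the hypothesis in through the associated primes that define the irrational centers. Writing $d=\codim_X x$ and working over the local ring $\sO_{X,x}$ with maximal ideal $\frm_x$, the assertion $\depth_x\sO_X(-\DD)\ge\min(3,d)$ is equivalent to $H^i_{\frm_x}(\sO_X(-\DD))=0$ for all $i<\min(3,d)$. The cases $d\le 2$ require nothing new: since $(X,\DD)$ is a normal pair, $\sO_X(-\DD)$ is a rank-one reflexive (divisorial) sheaf, hence satisfies Serre's condition $S_2$, so $H^0_{\frm_x}=H^1_{\frm_x}=0$ whenever $d\ge 2$, and $H^0_{\frm_x}=0$ by torsion-freeness when $d=1$. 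Thus the only real content is the case $d\ge 3$, where the remaining requirement is
\[
H^2_{\frm_x}\bigl(\sO_X(-\DD)\bigr)=0 .
\]

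For this I would fix a log resolution $\pi\colon Y\to X$ of the pair as in \S\ref{sec:pairs-gener-pairs}, with the divisor $D_Y$ chosen so that $\pi_*\sO_Y(-D_Y)=\sO_X(-\DD)$, and consider the canonical map $\sO_X(-\DD)\to R\pi_*\sO_Y(-D_Y)$ together with its cone $\sQ$. By construction $\cH^q(\sQ)=R^q\pi_*\sO_Y(-D_Y)$ for $q\ge 1$ and $\cH^q(\sQ)=0$ for $q\le 0$. By the definition of the absolute irrational centers in \S\ref{sec:irrational-centers} (whose absoluteness makes the following independent of $\pi$), these centers are the closures of the associated primes of the sheaves $R^q\pi_*\sO_Y(-D_Y)$, $q\ge1$; hence the hypothesis that $x$ is \emph{not} the general point of an absolute irrational center says precisely that $\frm_x\notin\operatorname{Ass}\bigl(R^q\pi_*\sO_Y(-D_Y)\bigr)$, i.e. $H^0_{\frm_x}\bigl(R^q\pi_*\sO_Y(-D_Y)\bigr)=0$, for every $q\ge1$. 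Feeding this into the local-cohomology spectral sequence $H^p_{\frm_x}(\cH^q(\sQ))\Rightarrow\bH^{p+q}_{\frm_x}(\sQ)$, the only term of total degree $1$ is $H^0_{\frm_x}(R^1\pi_*\sO_Y(-D_Y))=0$, so $\bH^1_{\frm_x}(\sQ)=0$. The long exact sequence of the triangle $\sO_X(-\DD)\to R\pi_*\sO_Y(-D_Y)\to\sQ\xrightarrow{+1}$ then shows that the natural map
\[
H^2_{\frm_x}\bigl(\sO_X(-\DD)\bigr)\hookrightarrow\bH^2_{\frm_x}\bigl(R\pi_*\sO_Y(-D_Y)\bigr)
\]
is injective, so it remains to prove that the target vanishes.

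The heart of the argument, and the step I expect to be the main obstacle, is this vanishing, which I would establish by Grothendieck duality. Since $Y$ is smooth, $R\sHom_X\bigl(R\pi_*\sO_Y(-D_Y),\omega_X^{\bullet}\bigr)\simeq R\pi_*\,\omega_Y(D_Y)[\dim X]$, and local duality at $x$ converts the required vanishing into
\[
\bigl(R^{\,d-2}\pi_*\,\omega_Y(D_Y)\bigr)_x=0 .
\]
When $\DD=0$ this is exactly Grauert--Riemenschneider vanishing and holds unconditionally; in general $R^{j}\pi_*\omega_Y(D_Y)$ need not vanish, but by Kollár's torsion-freeness and vanishing package (\cite{Kollar_local_KV}, cf.\ \cite{KollarKovacsRP}) its associated primes are supported on the relevant centers, and the duality must be arranged so that these are again the absolute irrational centers through $x$. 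The real work is therefore bookkeeping: reconciling the description of irrational centers via associated primes of $R^q\pi_*\sO_Y(-D_Y)$ with the support of $R^{d-2}\pi_*\omega_Y(D_Y)$ dictated by duality, and checking that the hypothesis on $x$ rules out a contribution in the single relevant degree $j=d-2\ge 1$.

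Finally, it is worth recording why the bound stops at $3$, as this pinpoints the limits of the method. The hypothesis controls only the degree-zero (associated-prime) part of the higher direct images, which is exactly enough to kill $\bH^1_{\frm_x}(\sQ)$ but not $\bH^2_{\frm_x}(\sQ)$: the obstruction to improving the estimate to $\depth\ge 4$ is the term $H^1_{\frm_x}\bigl(R^1\pi_*\sO_Y(-D_Y)\bigr)$, which is not detected by irrational centers. Consequently one cannot expect full Cohen--Macaulayness from the present hypothesis alone; that conclusion requires the stronger rational hypothesis of Theorem~\ref{thm:main-1}.
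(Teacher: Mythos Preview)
Your overall architecture is the paper's: take the distinguished triangle comparing $\sO_X(-\DD)$ (which, by normality of the pair, equals $\pi_*\sO_Y(-\DD_Y)$) with $\myR\pi_*\sO_Y(-\DD_Y)$, kill the contribution of the cone in the relevant degree using the associated-prime hypothesis, and reduce to a statement about $\myR\pi_*\sO_Y(-\DD_Y)$ handled by Grothendieck duality and a vanishing theorem for $\omega_Y(\DD_Y)$. You phrase everything through local cohomology, the paper through $\sExt^{-i}(\,\cdot\,,\omega_X^\mydot)$; by local duality these are equivalent, and your reduction to $\bigl(\myR^{\,d-2}\pi_*\omega_Y(\DD_Y)\bigr)_x=0$ is correct.

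The gap is at this last step, and it is a real one. Your plan for justifying the vanishing is both unnecessary and, as stated, insufficient. It is \emph{unnecessary} because in fact $\myR^{\,j}\pi_*\omega_Y(\DD_Y)=0$ for all $j>0$, unconditionally: this is Grauert--Riemenschneider vanishing for pairs, Theorem~\ref{thm:rel-kv-vanish}. The point you are missing is that $\DD_Y$ is the \emph{strict transform} of $\DD$, hence contains no $\pi$-exceptional components and meets $\exc(\pi)$ transversally; consequently $\sO_Y$ is $\pi$-big on every log canonical center of $(Y,\DD_Y)$, and the log-Kawamata--Viehweg argument goes through exactly as for $\DD=0$. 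So the step you flagged as ``the main obstacle'' dissolves, and no hypothesis on $x$ is needed there at all --- the irrational-center hypothesis is used \emph{only} to control the cone $\sQ$. Your proposed workaround is also \emph{insufficient}: even if you could match the associated primes of $\myR^{\,d-2}\pi_*\omega_Y(\DD_Y)$ with irrational centers, knowing that $\frm_x$ is not an associated prime gives only $H^0_{\frm_x}$-vanishing of that sheaf, not vanishing of the stalk, so you could not conclude $\bigl(\myR^{\,d-2}\pi_*\omega_Y(\DD_Y)\bigr)_x=0$ by that route.

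One minor point: your treatment of $d\le 2$ via ``$\sO_X(-\DD)$ is reflexive'' tacitly assumes $X$ is normal, which is not part of the definition of a normal pair (see Proposition~\ref{prop:normal-reduced-is-a-normal-pair}). The paper avoids this by proving the $\sExt$-vanishing for $\pi_*\sO_Y(-\DD_Y)$ uniformly in $d$ and invoking normality of the pair only at the end to identify this sheaf with $\sO_X(-\DD)$.
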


The main focus of this article is the introduction of the notion of \emph{irrational
  centers} as a tool to study singularities. The above theorem is a demonstration of
how one may use this notion.
I also prove that 

\begin{thm}[(= Theorem~\ref{thm:irrational-is-non-klt})]
  Absolute irrational centers of lc pairs are also non-klt centers.
\end{thm}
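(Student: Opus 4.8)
The plan is to argue by contraposition and localization. I want to show that if an irreducible closed subvariety $Z\subseteq X$ is \emph{not} a non-klt center of the lc pair $(X,\DD)$, then it cannot be an absolute irrational center. Writing $\eta$ for the generic point of $Z$, the property of being an associated (irrational) center is local at $\eta$, so I would replace $X$ by $\Spec\sO_{X,\eta}$ and work with the restriction of a fixed log resolution $\pi\colon Y\to X$ on which $\pi^{-1}_*\DD+\exc(\pi)$ is snc. The whole argument is then an induction on $\dim X$, the dichotomy being governed by whether $\eta$ lies on the non-klt locus $\nklt(X,\DD)$.

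First I would dispose of the klt case. If $\eta\notin\nklt(X,\DD)$, then in a neighborhood of $\eta$ the pair is plt, and by the pair analogue of the rationality of klt singularities (that a plt pair is a rational pair, via Kawamata--Viehweg vanishing applied to $\lceil-\pi^*\DD\rceil$ on $Y$) the pair $(X,\DD)$ is a rational pair near $\eta$. A rational pair has no absolute irrational centers — consistent with $\sO_X(-\DD)$ being CM by Theorem~\ref{thm:main-1}, so that its only associated point is the generic point of $X$ — and hence $Z$ is not an absolute irrational center. Thus we may assume $\eta\in\nklt(X,\DD)$.

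The remaining and genuinely delicate case is when $\eta$ lies on the non-klt locus while $Z$ is not itself a non-klt center; this is exactly the possibility of an \emph{embedded} irrational center hiding inside the non-klt locus. Here I would choose a minimal non-klt center $W$ through $\eta$. Since $\eta$ is klt-generic on $X$ (as $\DD$ is a divisor), $W\subsetneq X$, and because $Z$ is not a non-klt center we have $Z\subsetneq W$, so $\dim W<\dim X$. Minimal lc centers of lc pairs are normal, so adjunction produces an lc pair $(W,\DD_W)$ whose non-klt centers are precisely the non-klt centers of $(X,\DD)$ contained in $W$. The crux is a compatibility lemma: that $Z$ being an absolute irrational center of $(X,\DD)$ forces $Z$ to be an absolute irrational center of $(W,\DD_W)$. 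Granting this, the inductive hypothesis gives that $Z$ is a non-klt center of $(W,\DD_W)$, and adjunction then promotes it to a non-klt center of $(X,\DD)$ — contradicting our standing assumption. This closes the induction and proves the theorem.

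The main obstacle is exactly this adjunction compatibility for the sheaf detecting non-rationality: the higher direct images $R^i\pi_*$ governing the failure of rationality need not, a priori, have their associated points confined to lc centers, and restriction to the strict transform of $W$ can create or destroy associated points, especially embedded ones. To control this I would lean on two inputs available for lc pairs: the Du Bois property of lc pairs, which supplies the Hodge-theoretic splitting and Koll\'ar-type injectivity/torsion-freeness needed to pin the associated points of $R^i\pi_*$ to lc centers; and the depth estimates of Theorems~\ref{thm:main-1} and~\ref{thm:main-2}, which bound $\depth_x\sO_X(-\DD)$ away from absolute irrational centers and thereby forbid spurious embedded associated points off the non-klt centers. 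Making the behavior of the rounding $\lceil-\pi^*\DD\rceil$ under restriction to $W$ precise, and matching it with the corresponding datum for $(W,\DD_W)$, is where the real work lies; I would isolate this as a single key lemma and prove it before assembling the induction.
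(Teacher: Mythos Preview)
Your outline has a genuine gap at exactly the point you identify as ``where the real work lies'': the adjunction compatibility lemma asserting that an absolute irrational center $Z$ of $(X,\rdown\Delta)$ contained in a minimal non-klt center $W$ becomes an absolute irrational center of the adjoint pair $(W,\DD_W)$. You have not proved this, and the tools you propose to attack it with are problematic. Invoking Theorem~\ref{thm:main-2} is circular: that theorem gives a depth bound at points \emph{assumed} not to be absolute irrational centers, so it cannot be used to show a point is not one. And if you can actually apply Koll\'ar--Ambro torsion-freeness to pin the associated primes of $\myR^i\phi_*\sO_Y(-\rdown{\Delta_Y})$ to non-klt centers, then you are done immediately and the induction on $\dim X$, the subadjunction to $W$, and the compatibility lemma are all superfluous. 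There is also a more structural issue: the compatibility lemma requires comparing higher direct images computed from log resolutions of two \emph{different} pairs, $(X,\rdown\Delta)$ and $(W,\DD_W)$, and there is no evident exact sequence or restriction map that transports associated points of the former to associated points of the latter; this is not a matter of bookkeeping with $\rup{-\pi^*\Delta}$ but a genuinely new statement you would have to prove from scratch.

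The paper takes a completely different and much more direct route. It exploits the definition of \emph{absolute} irrational center: to show $Z$ is not one, it suffices to exhibit a \emph{single} log resolution $\phi$ for which $Z$ is not a relative irrational center. The paper builds such a $\phi$ as a composite $\psi\circ\eta$, where $\psi$ is a minimal dlt model and $\eta$ is a \szabores. After a perturbation trick one obtains a quasi-isomorphism $\myR\phi_*\sO_Y(-\rdown{\Delta_Y})\qis \myR\phi_*\sO_Y(\rup{B_\varepsilon}-\rdown{\Delta_Y})$, and the right-hand side is written, via the discrepancy relation, in the form $K_Y+(\text{snc boundary})-\phi^*(K_X+\Delta)$ to which Koll\'ar--Ambro torsion-freeness applies directly, forcing every associated point of $\myR^i\phi_*$ to lie over a non-klt center. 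No induction, no subadjunction, and no compatibility lemma are needed.
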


This opens the door to numerous possible applications regarding log canonical pairs.
For instance it implies the following:

\begin{thm}[(= Corollary~\ref{cor:dlt}) 
  \protect{\cite[Thm.~111]{KollarKovacsRP},\cite[Thm.~2]{Kollar_local_KV}}] 
  Let $(X,\Delta)$ be a dlt pair. Then for any effective integral divisor $D\leq
  \rdown{\Delta}$ $\sO_X(-D)$ is a CM sheaf.
\end{thm}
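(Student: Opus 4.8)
The plan is to deduce the statement from Theorem~\ref{thm:main-1} by showing that $(X,D)$ is itself a \emph{rational} pair. First I would record the elementary reduction that $(X,D)$ is again dlt: since $D$ is integral and $0\le D\le\rdown{\Delta}\le\Delta$, decreasing the boundary only raises discrepancies, so the dlt property is preserved; in particular $(X,D)$ is normal and log canonical. Thus it suffices to prove that a dlt pair $(X,D)$ has no absolute irrational centers, for then $(X,D)$ is a rational pair and Theorem~\ref{thm:main-1} gives that $\sO_X(-D)$ is CM.

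To rule out absolute irrational centers, suppose $Z$ were one. By Theorem~\ref{thm:irrational-is-non-klt} (absolute irrational centers of lc pairs are non-klt centers), $Z$ must be a non-klt center of the dlt pair $(X,D)$. For a dlt pair the non-klt centers are exactly the strata of $\rdown{D}=D$, and by the defining local structure of dlt pairs the pair $(X,D)$ is simple normal crossing at the generic point $\eta_Z$ of every such stratum. Since an snc pair is a rational pair, $(X,D)$ is rational in a neighborhood of $\eta_Z$, so $\eta_Z$ does not lie in the non-rational locus. On the other hand, irrational centers are defined in analogy with associated primes and their union is precisely the non-rational locus, so the generic point of any irrational center does lie in the non-rational locus. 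Applied to $Z$ this is a contradiction; hence $(X,D)$ has no absolute irrational centers, is a rational pair, and $\sO_X(-D)$ is CM.

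The main obstacle is not the formal argument above but the two geometric inputs it rests on: that a dlt pair is snc at the generic point of each of its non-klt centers, and that an snc pair is a rational pair (so that such a center cannot lie in the non-rational locus). I would make sure both are available from the earlier sections — the first is the standard local description of dlt singularities, and the second should be one of the basic examples of rational pairs developed in \S\ref{sec:irrational-centers} — together with the characterization that a pair is rational exactly when it has no (absolute) irrational centers, which is the content underlying the identification of the non-rational locus with the union of irrational centers. Note that this route bypasses the depth estimate of Theorem~\ref{thm:main-2} entirely, whose bound $\min(3,\codim_X x)$ would not by itself yield full Cohen--Macaulayness; invoking the rational-pair criterion is what upgrades the conclusion to CM in all codimensions.
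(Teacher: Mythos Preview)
Your overall strategy is to reach the same endpoint as the paper --- show that $(X,D)$ is a rational pair and then apply Corollary~\ref{cor:main-thm-1} --- but the paper does this in one line by citing \cite[Thm.~111]{KollarKovacsRP} (cf.\ Example~\ref{ex:dlt-is-rtl}), whereas you try to deduce rationality internally via irrational centers. That detour has two real problems.

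First, the implication ``$(X,D)$ has no absolute irrational centers $\Rightarrow$ $(X,D)$ is a rational pair'' is not proved anywhere in the paper, and your justification does not supply it. Being a rational pair means there exists \emph{one} log resolution $\phi$ with $\myR^i\phi_*\sO_Y(-D_Y)=0$ for all $i>0$; having no absolute irrational centers only says that for each candidate $Z$ there is \emph{some} resolution for which $Z$ is not a relative irrational center, and these resolutions may differ with $Z$. The sentence from the introduction about the union of irrational centers being the non-rational locus is a statement about supports, not the equivalence you need; note also the remark after Definition~\ref{def:rat-sing-pairs} that the notion genuinely depends on the resolution when $D\neq 0$. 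So Step ``no absolute irrational centers $\Rightarrow$ rational pair'' is a gap, and it is exactly the content you are trying to avoid citing.

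Second, even granting that step, your route is circular. You invoke Theorem~\ref{thm:irrational-is-non-klt} to turn an absolute irrational center into a non-klt center, but the proof of that theorem (applied to a pair which is already dlt, so the minimal dlt model is the identity) uses \cite[2.1]{Kovacs10a} to obtain $\myR^i\eta_*\sO_Y(-\rdown{\Delta_Y})=0$, i.e.\ precisely the statement that the dlt pair is a rational pair. So the machinery you are leaning on already consumes the external input the paper cites. The clean fix is the paper's: quote \cite[Thm.~111]{KollarKovacsRP} (or \cite[2.1]{Kovacs10a}) to get that $(X,D)$ is a rational pair, and then apply Corollary~\ref{cor:main-thm-1}.
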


\medskip

\begin{thm}[(= Corollary~\ref{cor:lc}) 
  \protect{\cite{MR2435844},\cite{Fujinobook},\cite{Kollar_local_KV}}] 
  Let $(X,\Delta)$ be an lc pair and $x\in X$ which is not a non-klt center of
  $(X,\Delta)$. Then
  $$
  \depth_x\sO_X(-\rdown{\Delta})\geq \min ( 3, \codim_X x ).  
  $$
\end{thm}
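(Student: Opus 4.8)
The plan is to obtain this statement as a formal consequence of the two preceding theorems: Theorem~\ref{thm:main-2} (the depth estimate for a normal pair away from absolute irrational centers) and Theorem~\ref{thm:irrational-is-non-klt} (absolute irrational centers of an lc pair are non-klt centers). These two results were arranged precisely so as to combine, so the argument should be a short deduction rather than a fresh computation.

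First I would record the reductions that make Theorem~\ref{thm:main-2} applicable. Since $(X,\Delta)$ is log canonical, $X$ is normal and $\rdown{\Delta}$ is a reduced effective integral divisor, so $(X,\rdown{\Delta})$ is a normal pair and Theorem~\ref{thm:main-2} applies with $\DD=\rdown{\Delta}$. It therefore suffices to verify the hypothesis of that theorem at $x$, namely that $x$ is not the general point of an absolute irrational center.

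Next I would translate the given hypothesis. We are told that $x$ is not a non-klt center of $(X,\Delta)$, i.e.\ that $\overline{\{x\}}$ is not a non-klt center of $(X,\Delta)$. Suppose toward a contradiction that $x$ were the general point of an absolute irrational center $W=\overline{\{x\}}$. By Theorem~\ref{thm:irrational-is-non-klt}, applied to the lc pair $(X,\Delta)$, every absolute irrational center is a non-klt center of $(X,\Delta)$; hence $W$ would be a non-klt center, contradicting the hypothesis. Thus $x$ is not the general point of an absolute irrational center, and Theorem~\ref{thm:main-2} yields $\depth_x\sO_X(-\rdown{\Delta})\geq\min(3,\codim_X x)$, as claimed.

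The only genuinely delicate point — and the step I expect to require care — is the bookkeeping between the two notions of ``center'' and the two divisors in play. The absolute irrational centers are intrinsically attached to the reduced-boundary pair $(X,\rdown{\Delta})$ that feeds Theorem~\ref{thm:main-2}, whereas Theorem~\ref{thm:irrational-is-non-klt} invokes the full lc pair $(X,\Delta)$ with its fractional part $\Delta-\rdown{\Delta}$ intact; one must confirm that the absolute irrational centers appearing in the first theorem are exactly those shown to be non-klt centers of $(X,\Delta)$ in the second, so that the implication ``absolute irrational center $\Rightarrow$ non-klt center of $(X,\Delta)$'' may legitimately be used. This is precisely the compatibility for which Theorem~\ref{thm:irrational-is-non-klt} is formulated, and once it is in hand the corollary follows at once.
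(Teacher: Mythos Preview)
Your deduction is correct and essentially matches the paper's own proof, which likewise combines Theorem~\ref{thm:irrational-is-non-klt} with the depth estimate (the paper cites Theorem~\ref{thm:main-theorem} and Proposition~\ref{prop:normal-reduced-is-a-normal-pair} directly rather than routing through the packaged Corollary~\ref{thm:appl--theorem}, but this is the same argument unpacked). One caution on wording: ``normal pair'' here is the technical Definition~\ref{def:normal-pair}, not merely ``$X$ normal and $D$ reduced''; your claim that $(X,\rdown{\Delta})$ is a normal pair is true, but it rests on the fact that $X$ normal forces $\sO_X(-\rdown{\Delta})\simeq\phi_*\sO_Y(-\rdown{\Delta_Y})$ for any log resolution $\phi$, which is precisely what the paper's proof extracts from Proposition~\ref{prop:normal-reduced-is-a-normal-pair}.
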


\medskip

\noindent
It should also be noted that for lc pairs Koll\'ar's results are more general than
the ones here. On one hand his results extend to more generally chosen integral
divisors and also to slc singularities. I believe both of those generalizations are
possible through the methods presented here, but this will be addressed at a later
time.

\begin{demo}{\bf Definitions and Notation}\label{demo:defs-and-not}
  Unless otherwise stated, all objects are assumed to be defined over $\bC$, all
  schemes are assumed to be of finite type over $\bC$ and a morphism means a morphism
  between schemes of finite type over $\bC$.

  For definitions related to \emph{pairs}, see \eqref{ssec:basic-definitions}.

  If $\phi:Y\to Z$ is a birational morphism, then $\exc(\phi)$ will denote the
  \emph{exceptional set} of $\phi$. By abuse of notation, if this exceptional set is
  of pure codimension $1$, then $\exc(\phi)$ will also denote the \emph{exceptional
    divisor} of $\phi$. For a closed subscheme $W\subseteq X$, the ideal sheaf of $W$
  is denoted by $\sI_{W\subseteq X}$ or if no confusion is likely, then simply by
  $\sI_W$.  For a point $x\in X$, $\kappa(x)$ denotes the residue field of
  $\sO_{X,x}$.
  

  For a proper birational morphism $\pi:Y\to X$ let $T\subseteq X$ denote the
  indeterminacy locus of the rational map $\pi^{-1}:X\ratmap Y$. Then for a subset
  $W\subseteq X$ we define the \emph{strict transform of $W$ on $Y$}, denoted by
  $\pi^{-1}_*W$ as the closure of $\pi^{-1}(W\setminus T)$ in $Y$. Notice that if $W$
  is contained in $T$, then its strict transform is the empty set.


  Let $X$ be a noetherian scheme, 
  $x\in X$ a (not necessarily closed) point, and $\sF$ a coherent sheaf on $X$.  The
  \emph{dimension} and \emph{codimension} of the closed subscheme
  $\overline{\{x\}}\subseteq X$ will be denoted by $\dim x$ and $\codim_X x$
  respectively.  In particular, $\dim X= \dim x + \codim_X x$ for any $x\in X$.  The
  \emph{dimension} of $\sF$ is the dimension of its support: $\dim\sF\leteq
  \dim\supp\sF$. The local dimension, denoted by $\dim_x$ is understood on the local
  scheme $(X,x)$ and it is equal to $\dim \sF_x$ the dimension of $\sF_x$ as an
  $\sO_{X,x}$-module.  The \emph{depth of $\sF$ at $x$}, denoted by $\depth_x\sF$ is
  defined as the depth of $\sF_x$ as an ${\sO_{X,x}}$-module. A non-zero coherent
  sheaf $\sF$ is said to satisfy \emph{Serre's condition $S_n$} if
  $$
  \depth_x\sF \geq \min(n, \dim_x\sF)
  $$
  for all $x\in X$ \cite[p.63]{MR1251956}. 
  Notice that this definition implies that if $X$ is contained in another noetherian
  scheme $Y$, then $\sF$ satisfies Serre's condition $S_n$ regarded as a sheaf on $X$
  if and only if it satisfies $S_n$ regarded as a sheaf on $Y$. This is because the
  depth as well as the support of $\sF$ is independent of the ambient scheme
  considered.

  The dualizing complex of $X$ is denoted by $\omega_X^\mydot$ and if $X$ is of pure
  dimension $n$ the dualizing sheaf of $X$ is defined as $\omega_X\leteq
  h^{-n}(\omega_X^\mydot)$.  Note that if $X$ is not normal, then this is not
  necessarily the push-forward of the canonical sheaf from the non-singular locus.

 Let $x\in X$ be a closed point.  Then $\sF$ is called \emph{Cohen-Macaulay}
 (\emph{CM} for short) \emph{at $x$} if $\sF_x$ is a Cohen-Macaulay module over
 $\sO_{X,x}$, and $\sF$ is \emph{Cohen-Macaulay} 
 (\emph{CM} for short) if it is CM at $x$ for all closed points $x\in \supp\sF$.  In
 particular, $X$ is CM if so is $\sO_X$.  Finally, $X$ is called \emph{Gorenstein} if
 $\sO_X$ is CM and $\omega_X$ is an invertible sheaf.

  A relatively straightforward consequence of the definition of the
  dualizing sheaf and basic properties of CM rings is that $X$ is CM
  if and only if $\omega_X^\mydot\qis \omega_X[n]$ cf.\
  \cite[3.5.1]{Conrad00}.

  Let $X$ be a complex scheme of dimension n. Let $D_{\rm filt}(X)$ denote the
  derived category of filtered complexes of $\sO_{X}$-modules with differentials of
  order $\leq 1$ and $D_{\rm filt, coh}(X)$ the subcategory of $D_{\rm filt}(X)$ of
  complexes $\cx K$, such that for all $i$, the cohomology sheaves of $Gr^{i}_{\rm
    filt}{\cx K}$ are coherent cf.\ \cite{DuBois81}, \cite{GNPP88}.  Let $D(X)$ and
  $D_{\rm coh}(X)$ denote the derived categories with the same definition except that
  the complexes are assumed to have the trivial filtration.  The superscripts $+, -,
  b$ carry the usual meaning (bounded below, bounded above, bounded).  Isomorphism in
  these categories is denoted by $\qis$.  A sheaf $\sF$ is also considered as a
  complex $\sF^\kdot$ with $\sF^0=\sF$ and $\sF^i=0$ for $i\neq 0$.  If ${\cx K}$
  is a complex in any of the above categories, then $h^i({\cx K})$ denotes the
  $i$-th cohomology sheaf of ${\cx K}$.

  The right derived functor of an additive functor $F$, if it exists, is denoted by
  $\myR F$ and $\myR^iF$ is short for $h^i\circ \myR F$. Furthermore, $\bH^i$,
  $\bH^i_{\rm c}$, $\bH^i_Z$ , and $\sH^i_Z$ will denote $\myR^i\Gamma$,
  $\myR^i\Gamma_{\rm c}$, $\myR^i\Gamma_Z$, and $\myR^i\sH_Z$ respectively, where
  $\Gamma$ is the functor of global sections, $\Gamma_{\rm c}$ is the functor of
  global sections with proper support, $\Gamma_Z$ is the functor of global sections
  with support in the closed subset $Z$, and $\sH_Z$ is the functor of the sheaf of
  local sections with support in the closed subset $Z$.  Note that according to this
  terminology, if $\phi\col Y\to X$ is a morphism and $\sF$ is a coherent sheaf on
  $Y$, then $\myR\phi_*\sF$ is the complex whose cohomology sheaves give rise to the
  usual higher direct images of $\sF$. $\sHom_X(\sF,\sG)$ denotes the sheaf of
  morphisms between the sheaves $\sF$ and $\sG$ and $\sExt^i_X\leteq
  \myR^i\sHom_X=h^i\circ \myR\sHom_X$.

  We will use the notion that a morphism ${f}: \sfA\to \sfB$ in a derived category
  \emph{has a left inverse}. This means that there exists a morphism $f^\ell: \sfB\to
  \sfA$ in the same derived category such that $f^\ell\circ{f}:\sfA\to\sfA$ is the
  identity morphism of $\sfA$. I.e., $f^\ell$ is a \emph{left inverse} of ${f}$.
\end{demo}

\section{Rational 
pairs}\label{sec:pairs-gener-pairs}

\subsection{Basic definitions}\label{ssec:basic-definitions}

A \emph{$\bQ$-divisor} is a $\bQ$-linear combination of integral Weil divisors;
$\Delta=\sum a_i\Delta_i$, $a_i\in\bQ$, $\Delta_i$ (integral) Weil divisor. For a
$\bQ$-divisor $\Delta=\sum a_i\Delta_i$ we will use the following notation:
$\Delta_{\red}\leteq\sum \Delta_i$ and the \emph{round-down} of $\Delta$ is defined
by the formula: $\rdown \Delta=\sum \rdown{a_i}\Delta_i$, where $\rdown{a_i}$ is the
largest integer not larger than $a_i$.

A \emph{log variety} or \emph{pair} $(X,\Delta)$ consists of an irreducible
variety (i.e., an irreducible reduced scheme of finite type over a field $k$) $X$ and
an effective $\bQ$-divisor $\Delta\subseteq X$ such that no irreducible component of
$\Delta$ is contained in $\Sing X$. (This last assumption is automatically satisfied
if, for example, $X$ is normal.)  A morphism of pairs $\phi:(Y,\Gamma)\to (X,\Delta)$
is a morphism $\phi:Y\to X$ such that $\phi(\supp \Gamma)\subseteq \supp\Delta$.

A \emph{reduced pair} is a pair $(X,\DD)$ where $\DD$ is a reduced integral divisor.
In this case with a slight abuse of notation we will use $\DD$ to also denote $\supp
D$.  If $(X,\DD)$ is a reduced pair, then $(X,\DD)$ is said to have \emph{simple
  normal crossings} or to be an \emph{snc pair at $p\in X$} if $X$ is smooth at $p$
and the components of $\DD$ are smooth at $p$ intersecting transversely in a Zariski
neighbourhood of $p$, i.e., in local analytic coordinates, $x_1,\dots,x_n$, near $p$,
$D$ is defined by $\prod_{i=1}^r x_i=0$ for some $r$. Furthermore, $(X,\DD)$ is
\emph{snc} if it is snc at every $p\in X$.  An snc pair $(X,\DD)$ will be called a
\emph{smooth pair} if the irreducible components of $D$ do not intersect.
Equivalently, $(X,\DD)$ is a smooth pair if and only if both $X$ and $D$ are smooth.

A morphism of pairs $\phi:(Y,\Delta_Y)\to (X,\Delta)$ is a \emph{partial log
  resolution of $(X,\Delta)$} if $\phi:Y\to X$ is a proper, birational morphism which
is an isomorphism near general points of $\Delta$ such that
$\Delta_Y=\phi^{-1}_*\Delta$.  A partial log resolution is a \emph{log resolution} if
$\exc(\phi)$ is a divisor and $\big(Y,(\Delta_Y)_{\red}+\exc(\phi)\big)$ is an snc
pair.  
Note that we allow $(X,\Delta)$ to be snc and still call a morphism with these
properties a log resolution. Also note that the notion of a log resolution is not
used consistently in the literature.

If $(X,\Delta)$ is a pair, then $\Delta$ is called a \emph{boundary} if
$\rdown{(1-\varepsilon)\Delta}=0$ for all $0<\varepsilon<1$, i.e., the coefficients
of all irreducible components of $\Delta$ are in the interval $[0,1]$.  For the
definition of \emph{klt, dlt}, and \emph{lc} pairs see \cite{KM98}.
  %
Let $(X, \Delta)$ be a pair and $\mu:X^{\rm m}\to X$ a proper birational morphism.
Let $E=\sum a_iE_i$ be the discrepancy divisor, i.e., a linear combination of
exceptional divisors such that
$$
K_{X^{\rm m}}+\mu^{-1}_*\Delta \sim_{\bQ} \mu^*(K_X + \Delta) + E
$$ and let
$\Delta^{\rm m}\leteq \mu^{-1}_*\Delta + \sum_{a_i\leq -1}E_i$.  For an irreducible
divisor $F$ on a birational model of $X$ we define its discrepancy as its coefficient
in $E$.  Notice that as divisors correspond to valuations, this discrepancy is
independent of the model chosen, it only depends on the divisor.  A \emph{non-klt
  place} of a pair $(X,\Delta)$ is an irreducible divisor $F$ over $X$ with
discrepancy at most $-1$ and a \emph{non-klt center} is the image of a non-klt place.
$\excnklt(\mu)$ denotes the union of the loci of all non-klt places of $\phi$.

Note that in the literature, non-klt places and centers are often called log
canonical places and centers. For a more detailed and precise definition see
\cite[p.37]{Hacon-Kovacs10}.
  
Now if $(X^{\rm m}, \Delta^{\rm m})$ is as above, then it is a \emph{minimal dlt
  model} of $(X,\Delta)$ if it is a dlt pair and the discrepancy of every
$\mu$-exceptional divisor is at most $-1$ cf.\ \cite{KK10}. Note that if $(X,\Delta)$
is lc with a minimal dlt model $(X^{\rm m}, \Delta^{\rm m})$, then $K_{X^{\rm
    m}}+\Delta^{\rm m} \sim_{\bQ} \mu^*(K_X+\Delta)$. Also note that minimal dlt
models are not unique; for instance, blowing up the intersection of two or more
irreducible components of an snc divisor produces a (new) minimal dlt model of the
given snc pair.

\subsection{Rational pairs} 

Recall the definitions of \emph{rational singularities}:

\begin{defn}\label{def:rtl-sing}
  Let $X$ be a normal variety and $\phi :Y \rightarrow X$ a resolution of
  singularities. $X$ is said to have \emph{rational} singularities if
  $\myR^i\phi_*\sO_Y=0$ for all $i>0$, or equivalently if the natural map $\sO_X\to
  \myR\phi_*\sO_Y$ is a quasi-isomorphism.
\end{defn}

\begin{defn}\label{def:normal-pair}
  Let $(X,\DD)$ be a pair and $\DD$ an integral divisor. Then $(X,\DD)$ is
  called a \emph{normal pair} if there exists a log resolution $\phi:(Y,\DD_Y)\to
  (X,\DD)$ such that the natural morphism $\phi^\#:\sO_X(- \DD)\to
  \phi_*\sO_Y(-{\DD_Y})$ is an isomorphism.
\end{defn}

\begin{prop}
  Let $(X,\DD)$ be a normal pair. Then $(X,\DD_{\red})$ is an snc pair in codimension
  $1$.
\end{prop}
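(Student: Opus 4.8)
The plan is to exhibit an open set $U\subseteq X$ with $\codim_X(X\setminus U)\geq 2$ on which $(X,D_{\red})$ is snc. The locus $\Sigma$ where $(X,D_{\red})$ fails to be snc is closed: it is $\Sing X$ together with the locus inside $X_{\mathrm{sm}}$ where $D_{\red}$ is singular or its components fail to cross transversely. Hence it suffices to show that $\Sigma$ contains no point of codimension $\leq 1$, for then $U=X\setminus\Sigma$ does the job. At the generic point of $X$ there is nothing to check, so the statement reduces to verifying the snc condition at every codimension $1$ point $x$, and I split this according to whether $x$ lies on $\supp D$.

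First take a codimension $1$ point $x\in\supp D$, i.e.\ the generic point of a component $D_0$ of $D$. By the very definition of a pair, no component of $D$ is contained in $\Sing X$; hence $X$ is smooth at $x$ and $\sO_{X,x}$ is a discrete valuation ring. Since $D_0$ has codimension $1$ it is then a smooth divisor at $x$, and because $x$ is the generic point of $D_0$ no other component of $D_{\red}$ passes through it, so transversality is vacuous and $(X,D_{\red})$ is snc at $x$. This step uses only the pair axiom and not the normal-pair hypothesis.

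The crux is a codimension $1$ point $x\notin\supp D$, where the hypothesis $\phi^\#$ enters. Choose a neighbourhood $V\ni x$ with $V\cap\supp D=\emptyset$. Since the strict transform satisfies $\phi(\phi^{-1}_*D)\subseteq\supp D$, the divisor $D_Y$ is disjoint from $\phi^{-1}(V)$, so on $V$ the sheaves $\sO_X(-D)$ and $\sO_Y(-D_Y)$ restrict to $\sO_X$ and $\sO_Y$, and $\phi^\#$ becomes an isomorphism $\sO_X\xrightarrow{\ \sim\ }\phi_*\sO_Y$ over $V$. Now $Y$ is smooth, hence normal, so $\phi$ factors through the normalization as $Y\to\widetilde X\xrightarrow{\nu}X$; the map $Y\to\widetilde X$ is proper and birational onto the normal scheme $\widetilde X$, so by Zariski's main theorem its pushforward of $\sO_Y$ equals $\sO_{\widetilde X}$, whence $\phi_*\sO_Y=\nu_*\sO_{\widetilde X}$ is the integral closure of $\sO_X$ in $K(X)$. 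The isomorphism therefore forces $\sO_{X,x}$ to equal its own integral closure, making it a one-dimensional normal local domain, i.e.\ a discrete valuation ring, so $X$ is smooth at $x$.

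Putting the three cases together, $X$ is smooth at every point of codimension $\leq 1$ and $D_{\red}$ is a smooth divisor there, so $\Sigma$ has codimension $\geq 2$ and $(X,D_{\red})$ is snc in codimension $1$. I expect the main obstacle to be the third step: the identification $\phi_*\sO_Y=\nu_*\sO_{\widetilde X}$ (which relies on $Y$ being normal and on the proper birational map $Y\to\widetilde X$ onto a normal target), together with the bookkeeping that away from $\supp D$ the twisted sheaves genuinely reduce to the structure sheaves so that $\phi^\#$ specializes to $\sO_X\cong\phi_*\sO_Y$. The codimension count of the first paragraph is routine, but note it depends on the smoothness extracted in steps two and three to guarantee that $\Sigma$ really avoids codimension $\leq 1$.
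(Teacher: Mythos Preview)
Your proof is correct and follows essentially the same two-part decomposition as the paper: away from $\supp D$ the hypothesis $\phi^\#$ reduces to $\sO_X\simeq\phi_*\sO_Y$, forcing normality (hence regularity in codimension $1$), while near $D$ a one-line observation gives snc at the generic points of components. The only cosmetic difference is in that second step: the paper invokes the clause in the definition of log resolution that $\phi$ is an isomorphism near general points of $D$ (so snc is inherited from $(Y,D_Y)$), whereas you use the pair axiom that no component of $D$ lies in $\Sing X$; both are immediate.
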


\begin{proof}
  Let $\phi:(Y,\DD_Y)\to (X,\DD)$ be a log resolution for which the natural
  morphism $$\phi^\#:\sO_X(- \DD)\to \phi_*\sO_Y(-{\DD_Y})$$ is an isomorphism.

  First consider $X\setminus\supp\DD$. Then $\phi^\#:\sO_{X\setminus\supp\DD}\to
  \phi_*\sO_{Y\setminus\supp\phi^{-1}(\DD)}$ is an isomorphism, and hence
  $X\setminus\supp\DD$ is normal.  In particular, $X$ is smooth in codimension $1$
  away from $\DD$.

  Next observe that by definition $\phi$ is an isomorphism near the general points of
  $\DD$ and hence $(X,\DD_{\red})$ is an snc pair in codimension $1$ near $\DD$. This
  proves the claim.
\end{proof}

\begin{prop}\label{prop:normal-reduced-is-a-normal-pair}
  Let $(X,\DD)$ be a reduced pair. Then we have the following two implications:
  \begin{enumerate}
  \item if $X$ is normal and $\myR^1\phi_*\sO_Y(-\DD_Y)=0$ 
    for a log resolution $\phi:(Y,\DD_Y)\to (X,\DD)$ , then $(X,\DD)$ is a normal
    pair, and
    \label{item:normal1}
  \item if $(X,\DD)$ is a normal pair and $\DD$ is Cartier, then $X$ is normal.
    \label{item:normal2}
  \end{enumerate}
\end{prop}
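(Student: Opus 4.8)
The plan is to treat the two implications separately, in both cases exploiting the short exact sequence
$$0\to \sO_Y(-\DD_Y)\to \sO_Y\to \sO_{\DD_Y}\to 0$$
on $Y$ together with the fact that $\phi_*\sO_Y=\sO_X$, which holds whenever $X$ is normal because $\phi$ is proper and birational. Throughout I would also use that $\phi$ is an isomorphism near the general points of $\DD$ and that $\DD_Y=\phi^{-1}_*\DD$ is the strict transform, so that the induced map $\phi|_{\DD_Y}\col \DD_Y\to \DD$ is dominant on every component.

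For part~(1), I would push the displayed sequence forward along $\phi$. Using $\phi_*\sO_Y=\sO_X$ and the hypothesis $\myR^1\phi_*\sO_Y(-\DD_Y)=0$, this yields a short exact sequence $0\to \phi_*\sO_Y(-\DD_Y)\to \sO_X\to \phi_*\sO_{\DD_Y}\to 0$, which I would compare with the structure sequence $0\to \sO_X(-\DD)\to \sO_X\to \sO_\DD\to 0$ of the reduced divisor $\DD$. Both present a subsheaf of $\sO_X$ as the kernel of a map out of $\sO_X$, and by naturality the two maps $\sO_X\to \sO_\DD$ and $\sO_X\to \phi_*\sO_{\DD_Y}$ are intertwined by the natural map $\psi\col \sO_\DD\to \phi_*\sO_{\DD_Y}$ induced by $\phi|_{\DD_Y}$. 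If $\psi$ is injective, the two kernels coincide, giving $\sO_X(-\DD)\simeq \phi_*\sO_Y(-\DD_Y)$, i.e.\ $(X,\DD)$ is a normal pair. The injectivity of $\psi$ is where the geometry enters: since $\phi|_{\DD_Y}$ is dominant on every component of the reduced scheme $\DD$, a function on $\DD$ that pulls back to $0$ must vanish. (In fact this injectivity already identifies $\phi_*\sO_Y(-\DD_Y)=\ker(\sO_X\to \phi_*\sO_{\DD_Y})$ with $\sO_X(-\DD)$ by left exactness of $\phi_*$, so the vanishing $\myR^1\phi_*\sO_Y(-\DD_Y)=0$ is really used only to present the lower sequence as a short exact sequence and is not strictly essential to the argument.)

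For part~(2), the extra hypothesis that $\DD$ is Cartier makes $\sO_X(-\DD)$ invertible, and I would exploit this through the projection formula. Tensoring the isomorphism $\sO_X(-\DD)\simeq \phi_*\sO_Y(-\DD_Y)$ with the line bundle $\sO_X(\DD)$ gives
$$\sO_X\simeq \sO_X(\DD)\tensor \phi_*\sO_Y(-\DD_Y)\simeq \phi_*\sO_Y(\phi^*\DD-\DD_Y).$$
Now $\DD$ is an effective Cartier divisor, so the total transform $\phi^*\DD$ is effective and dominates the strict transform $\DD_Y$; hence $E\leteq \phi^*\DD-\DD_Y$ is an effective $\phi$-exceptional divisor and $\sO_Y\subseteq \sO_Y(E)$. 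Combined with the always-present inclusions $\sO_X\subseteq \phi_*\sO_Y\subseteq \phi_*\sO_Y(E)$ and the isomorphism just obtained, this forces $\phi_*\sO_Y=\sO_X$. Finally, since $Y$ is smooth, hence normal, the morphism $\phi$ factors through the normalization $\nu\col \overline X\to X$, and $\phi_*\sO_Y=\nu_*\sO_{\overline X}$ is the integral closure of $\sO_X$ in its function field; the equality $\phi_*\sO_Y=\sO_X$ therefore says exactly that $\sO_X$ is integrally closed, i.e.\ $X$ is normal.

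The routine points — left exactness of $\phi_*$, the projection formula, and the factorization through the normalization — are standard; the two steps carrying the actual content are the injectivity of $\psi\col\sO_\DD\to\phi_*\sO_{\DD_Y}$ in part~(1) and the effectivity of $\phi^*\DD-\DD_Y$ in part~(2). I expect the effectivity claim to be the main thing to pin down carefully: it relies on $\DD$ being an \emph{effective} Cartier divisor, so that the pullback of a local equation of $\DD$ is a regular function, whence its order along every exceptional divisor is non-negative while the non-exceptional part is cut out with multiplicity one.
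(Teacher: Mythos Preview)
Your proof is correct and takes essentially the same approach as the paper: both set up the comparison between the structure sequence $0\to\sO_X(-\DD)\to\sO_X\to\sO_\DD\to 0$ and its pushforward from $Y$, use the injectivity of $\sO_\DD\to\phi_*\sO_{\DD_Y}$ (the paper's map $\gamma$) for part~(1), and for part~(2) use that $\phi^*\DD-\DD_Y$ is effective $\phi$-exceptional together with the projection formula to deduce $\phi_*\sO_Y\simeq\sO_X$. Your parenthetical observation that the vanishing hypothesis $\myR^1\phi_*\sO_Y(-\DD_Y)=0$ is not actually needed for part~(1) is correct and is a mild sharpening: the paper invokes the Snake Lemma on the full diagram (for which the vanishing makes the bottom row short exact), whereas your direct kernel identification via left exactness of $\phi_*$ and injectivity of $\psi$ bypasses this.
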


\begin{proof}
  Consider the following diagram of exact sequences:
  $$
  \xymatrix{%
    0 \ar[r] & \sO_X(- \DD)\ar[r]\ar[d]_\alpha & \sO_X \ar[r]\ar[d]_\beta & \sO_{\DD}
    \ar[r]\ar[d]_\gamma & 0\\
    0 \ar[r] & \phi_*\sO_Y(-{\DD_Y}) \ar[r] & \phi_*\sO_Y \ar[r] & \phi_*\sO_{\DD_Y}
    \ar[r] & \myR^1\phi_*\sO_Y(-\DD_Y) }
  $$
  By definition both $X$ and $D$ are reduced, so $\beta$ and $\gamma$ are injective
  and then $\alpha$ is injective as well.  

  Recall that $X$ is normal if and only is $\beta$ is an isomorphism and $(X,\DD)$ is
  normal if and only is $\alpha$ is an isomorphism.

  If $\beta$ is an isomorphism, then it is in particular surjective and hence
  $\gamma$ is surjective. If furthermore $\myR^1\phi_*\sO_Y(-\DD_Y)=0$, then $\gamma$
  is an isomorphism and then by the Snake Lemma $\alpha$ is also an isomorphism. This
  proves (\ref{prop:normal-reduced-is-a-normal-pair}.\ref{item:normal1}).

  If $\alpha$ is an isomorphism and $\DD$ is Cartier then $\phi^*D\sim \DD_Y+F$ for
  some effective $\phi$-exceptional divisor $F\subset Y$.  Therefore $\alpha$ factors
  as
  $$
  \xymatrix{%
    \sO_X(-\DD)\ar[r]\ar@/_2pc/[rr]^\simeq_\alpha & \phi_*\phi^*\sO_X(-\DD) \ar[r] &
    \phi_*\sO_Y(-\DD_Y). 
  }
  $$
  Since these are torsion-free sheaves of rank $1$ it follows that all three must be
  isomorphic. In other words,
  $$
  \sO_X(-\DD)\simeq \phi_*\phi^*\sO_X(-\DD)\simeq \sO_X(-\DD)\otimes \phi_*\sO_Y
  $$
  and hence $\beta$ is an isomorphism, so
  (\ref{prop:normal-reduced-is-a-normal-pair}.\ref{item:normal2}) is proven.
\end{proof}

\begin{defn}\cite{KollarKovacsRP}
  \label{def:rat-sing-pairs}
  A reduced pair $(X,\DD)$ is called a \emph{rational pair} if there exists a log
  resolution $\phi:(Y,\DD_Y)\to (X,\DD)$ such that
  \begin{equation*}
    \sO_X(-\DD)\qis\myR\phi_*\sO_Y(-{\DD_Y}).
  \end{equation*}
  A log resolution as above will be called a \emph{rational log resolution of
    $(X,\DD)$}.
\end{defn}

\begin{rem}
  From the definition it is not obvious whether $(X,\DD)$ being rational implies that
  $X$ has rational singularities.  It turns out that this is actually true if either
  $X$ is Cohen-Macaulay or $D$ is Cartier \eqref{thm:rtl-pairs-are-rtl}.
\end{rem}

\begin{rem}
  Notice that this definition is different from that of Schwede and Takagi's
  \cite{MR2492473} in several ways. In particular, according to this definition snc
  pairs are rational. On the other hand this definition is not independent of the
  resolution chosen but the only way it depends on the resolution is exactly the fact
  that it allows snc pairs to be rational. This situation is similar to the case of
  dlt singularities, where a dlt pair may have log canonical centers even though it
  has no ``purely'' log canonical singularities. In fact, the irrational centers
  (cf.\ \eqref{def:irrational-centers}) are exactly the log canonical (or non-klt)
  centers of a dlt pair.

  In Schwede and Takagi's terminology a rational pair with $D$ Cartier is said to
  have \emph{purely rational singularities}. See \cite[3.15]{MR2492473} for
  details. 
\end{rem}

\begin{example}\label{ex:dlt-is-rtl}
  Let $(X,\Delta)$ be a $\mathbb Q$-factorial dlt pair. Then it follows from
  \cite[111]{KollarKovacsRP} or
  \cite[2.1]{Kovacs10a} that $(X,\rdown{\Delta})$ is a rational pair.
\end{example}

The following is a simple consequence of known vanishing theorems that appear in
various forms in \cite{MR1993751,Fujinobook,KollarKovacsRP}. Since only this simple
version is needed here, a reasonably self-contained proof is provided for the
convenience of the reader. This statement was independently observed by Zsolt
Patakfalvi.

\begin{thm}
  [(Grauert-Riemenschneider vanishing for pairs) %
  {\rm cf.\ \cite{MR1993751,Fujinobook,KollarKovacsRP}}] %
  \label{thm:rel-kv-vanish}
  Let $(X,\DD)$ be a pair and $\phi:(Y,\DD_Y)\to (X,\DD)$ a log resolution. Let
  $B\leq \DD_Y$ be an effective reduced integral divisor.  Then
  $$\myR^i\phi_*\omega_Y(B)=0\qquad\text{ for $i>0$.}$$
\end{thm}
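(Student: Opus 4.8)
The plan is to deduce the statement from the classical Grauert--Riemenschneider vanishing theorem by an induction that peels off the components of $B$ one at a time via the Poincar\'e residue sequence. Note first that a direct appeal to relative Kawamata--Viehweg vanishing is awkward, because writing $\omega_Y(B)=\sO_Y(K_Y+B)$ and perturbing $B$ to a boundary $(1-\varepsilon)B$ leaves a remainder $\varepsilon B$ that need not be $\phi$-nef. Instead I exploit the hypotheses directly. Since $\phi$ is a log resolution, $B\leq\DD_Y$ is contained in the snc divisor $(\DD_Y)_{\red}+\exc(\phi)$, so $B$ is reduced and snc; moreover every component of $B$ is a component of $\DD_Y=\phi^{-1}_*\DD$, hence is a strict transform of a component of $\DD$ and in particular is \emph{not} $\phi$-exceptional. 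I would therefore prove the following slightly more flexible statement by induction on $\dim Y$, with an inner induction on the number $r$ of components of $B$: if $\phi\colon Y\to X$ is proper and birational, $Y$ is smooth, and $B$ is a reduced snc divisor on $Y$ no component of which is $\phi$-exceptional, then $\myR^i\phi_*\omega_Y(B)=0$ for all $i>0$. The base case $r=0$, i.e.\ $B=0$, is exactly Grauert--Riemenschneider vanishing for the proper birational morphism $\phi$.

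For the inductive step, choose an irreducible component $B_0\leq B$ and write $B=B_0+B'$ with $B'=B-B_0$. Since $B$ is snc, $B_0$ is smooth and adjunction $\omega_Y(B_0)|_{B_0}\simeq\omega_{B_0}$ gives the Poincar\'e residue sequence
\[
0 \to \omega_Y(B') \to \omega_Y(B) \to \omega_{B_0}(B'|_{B_0}) \to 0,
\]
where the last term is the pushforward of a sheaf on $B_0$. Applying $\myR\phi_*$ and passing to the long exact sequence of higher direct images, for each $i>0$ the sheaf $\myR^i\phi_*\omega_Y(B)$ sits between $\myR^i\phi_*\omega_Y(B')$ on its left and $\myR^i\phi_*\omega_{B_0}(B'|_{B_0})$ on its right. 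The first vanishes by the inner induction on $r$, since $B'$ has one fewer component and is still reduced, snc, and non-exceptional. For the second, a sheaf supported on $B_0$ has the same higher direct images under $\phi$ and under $\phi|_{B_0}$, so this term equals $\myR^i(\phi|_{B_0})_*\omega_{B_0}(B'|_{B_0})$, which vanishes by the induction on dimension applied to $\phi|_{B_0}\colon B_0\to\phi(B_0)$. Hence $\myR^i\phi_*\omega_Y(B)=0$.

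The crux --- and the step I expect to require the most care --- is checking that the hypotheses of the theorem are inherited by the restricted data $(\phi|_{B_0},\,B'|_{B_0})$, so that the induction on dimension is legitimate. Two things must hold. First, $\phi|_{B_0}$ must be birational onto its image for the inductive hypothesis to apply at all: this is where non-exceptionality of $B_0$ is used, since then $B_0$ is the strict transform of the prime divisor $\phi(B_0)\subseteq X$ and is an isomorphism over the locus where $\phi$ is. Birationality is precisely what makes the lower-dimensional statement (whose own base case is Grauert--Riemenschneider) yield vanishing in the full range $i>0$ rather than only above a positive fibre dimension. Second, $B'|_{B_0}$ must again be reduced snc with no $\phi|_{B_0}$-exceptional component. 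Reducedness and the snc property are the standard consequences of restricting an snc divisor to one of its components.

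Non-exceptionality is the delicate point. A component of $B'|_{B_0}$ is a component of some $B_j\cap B_0$ with $j\neq 0$, of pure dimension $\dim Y-2$. Here transversality of the snc divisor $(\DD_Y)_{\red}+\exc(\phi)$ is essential: it forces $B_j\cap B_0$ to meet every exceptional divisor of $\phi$ in codimension $\geq 3$, so no component of $B_j\cap B_0$ can lie inside $\exc(\phi)$. Consequently each such component meets the isomorphism locus of $\phi$ and maps birationally onto its $(\dim Y-2)$-dimensional image, confirming that it is not $\phi|_{B_0}$-exceptional. This closes the induction, and the theorem as stated follows by applying the generalized statement to the divisor $B\leq\DD_Y$, which we already observed is reduced, snc, and non-exceptional.
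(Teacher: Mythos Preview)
Your overall strategy---peel off components of $B$ via the Poincar\'e residue sequence, reducing to Grauert--Riemenschneider---is exactly the route the paper takes (packaged there as Lemma~\ref{lem:GR}, then applied with $M=L=0$). The problem is the inductive statement you wrote down. Requiring only that \emph{no component of $B$ is $\phi$-exceptional} is not strong enough for the induction to close, and in fact that statement is false. Take $Y$ a smooth threefold containing a $(-1,-1)$-curve $C\simeq\bP^1$ and let $\phi\colon Y\to X$ be the small contraction of $C$; choose $B_1,B_2$ smooth divisors through $C$ meeting transversally along $C$ (corresponding to two complementary $\sO(-1)$ summands of $N_{C/Y}$). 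Neither $B_i$ is exceptional, $B=B_1+B_2$ is snc, yet the residue calculation you set up gives
\[
R^1\phi_*\omega_Y(B)\;\simeq\;R^1(\phi|_{B_1})_*\omega_{B_1}(C)\;\simeq\;H^1\bigl(\bP^1,\omega_{\bP^1}\bigr)\;\simeq\;k\neq 0,
\]
since $R^i\phi_*\omega_Y(B_2)=0$ but the restricted divisor $C=B'|_{B_1}$ \emph{is} exceptional for $\phi|_{B_1}$. So the hypothesis ``no component is exceptional'' does not pass to the restriction.

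You actually noticed this: your verification that components of $B'|_{B_0}$ are non-exceptional invokes the transversality of $(\DD_Y)_{\red}+\exc(\phi)$, which is \emph{extra data not present in your inductive hypothesis}. That makes the induction circular as written. The fix is to build the needed condition into the statement you induct on. The paper does this by requiring in Lemma~\ref{lem:GR} that the trivial bundle be $\phi$-big on every log canonical center of $(Y,B)$, i.e.\ that \emph{every stratum} $B_{i_1}\cap\dots\cap B_{i_k}$ be non-contracted; this condition restricts cleanly because the strata of $(B_0,B'|_{B_0})$ are strata of $(Y,B)$. Equivalently, you could carry along an ambient reduced snc divisor $E$ with $\exc(\phi)\subseteq\supp E$ and $B+E$ snc, which is exactly what the log-resolution hypothesis provides initially and which survives restriction to $B_0$. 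Either reformulation makes your argument go through and coincide with the paper's.
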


\begin{subrem}
  Notice that the statement \emph{does not} say that vanishing holds for \emph{any}
  pairs. It is important that $\DD_Y$ is the strict transform of $\DD$.
\end{subrem}

We need the following simple generalization of the (relative) Kawamata-Viehweg
vanishing theorem.  It is contained implicitly in \cite{MR1993751} and explicitly in
\cite[2.33]{Fujinobook} and \cite[4.3]{KollarKovacsRP}.

\newcommand{\BB}{B}

\begin{lem}\label{lem:GR}
  Let $Y$ be a smooth variety, $\phi:Y\to X$ a proper morphism, and $M$ a $\phi$-nef
  and $\phi$-big $\bQ$-divisor on $Y$. Further let $L$ be a Cartier divisor with
  $\sL=\sO_Y(L)$ and $\BB+\sum \Delta_i$ an effective simple normal crossing divisor
  on $Y$.  Assume that for some $0\leq a_i< 1$, $L\equiv M+\sum a_i\Delta_i$ and that
  $M$ is $\phi$-big on all log canonical centers of $(Y,\BB)$.  Then
  $$
  \myR^if_*\big(\omega_Y(\BB)\otimes \sL\big)=0\qquad \text{for $i>0$}.
  $$
\end{lem}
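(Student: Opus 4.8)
The plan is to induct on the number of irreducible components of $\BB$, using the residue (adjunction) sequence to peel off one component at a time and reducing the base case to the classical relative Kawamata--Viehweg vanishing theorem. When $\BB=0$, the hypothesis $L\equiv M+\sum a_i\Delta_i$ with $M$ being $\phi$-nef and $\phi$-big and $0\leq a_i<1$ is precisely the setup of relative Kawamata--Viehweg vanishing, so $\myR^i\phi_*(\omega_Y\otimes\sL)=0$ for $i>0$; this anchors the induction on an arbitrary smooth $Y$.

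For the inductive step, write $\BB=\BB'+S$ where $S$ is an irreducible component of $\BB$. Since $(Y,\BB)$ is snc, $S$ is smooth and $\BB'|_S$ is a reduced snc divisor on $S$. Tensoring the residue sequence with the locally free sheaf $\sL$ and using the adjunction isomorphism $\omega_S\simeq\omega_Y(S)|_S$ gives the short exact sequence
$$
0\to \omega_Y(\BB')\otimes\sL\to \omega_Y(\BB)\otimes\sL\to \omega_S(\BB'|_S)\otimes\sL|_S\to 0.
$$
Applying $\myR\phi_*$ and passing to the long exact sequence, it suffices to prove that both outer terms have vanishing higher direct images in positive degrees. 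For the left term, $(Y,\BB')$ has one fewer component, the equivalence $L\equiv M+\sum a_i\Delta_i$ is unchanged, and every log canonical center of $(Y,\BB')$ is a stratum of $\BB'$, hence a stratum of $\BB$ and so a log canonical center of $(Y,\BB)$ on which $M$ is $\phi$-big. The inductive hypothesis therefore applies.

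For the right term, which is supported on $S$, we have $\myR^i\phi_*=\myR^i(\phi|_S)_*$, and on $S$ the restricted data satisfy $L|_S\equiv M|_S+\sum a_i\Delta_i|_S$ with the same coefficients $a_i\in[0,1)$ and with the $\Delta_i|_S$ snc. The divisor $M|_S$ is $\phi|_S$-nef because nefness is preserved under restriction to a subvariety. The crux of the argument is that $M|_S$ is moreover $\phi|_S$-big: this is exactly the content of the hypothesis that $M$ is $\phi$-big on the log canonical center $S$ of $(Y,\BB)$. Furthermore, every log canonical center of $(S,\BB'|_S)$ is a stratum of $\BB'|_S$, hence a stratum of $\BB$ and thus a log canonical center of $(Y,\BB)$ on which $M$ is $\phi$-big; restricting, $M|_S$ is $\phi|_S$-big there as well. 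Thus $(S,\BB'|_S)$ satisfies all hypotheses of the lemma, now with strictly fewer components, and the inductive hypothesis yields the required vanishing.

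The main obstacle, and the reason the statement is phrased as it is, lies in this last step: a $\phi$-nef and $\phi$-big $\bQ$-divisor need \emph{not} restrict to a $\phi$-big divisor on an arbitrary subvariety, so without the hypothesis that $M$ is $\phi$-big on \emph{all} log canonical centers the restriction to $S$ (and to the deeper strata appearing in later stages of the induction) could fail to meet the bigness requirement. Once one verifies that this hypothesis is inherited by every stratum encountered in the induction, the short exact sequence argument closes and the proof is complete.
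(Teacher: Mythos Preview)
Your proof is correct and follows essentially the same approach as the paper: peel off one component of $B$ via the residue short exact sequence, use the long exact sequence of $\myR\phi_*$, and reduce to Kawamata--Viehweg in the base case. The paper phrases the induction as a double induction on the number of components of $B$ and on $\dim Y$, whereas you induct on the number of components uniformly over all smooth $Y$; these are equivalent, and your treatment of how the bigness hypothesis on log canonical centers is inherited by both $(Y,B')$ and $(S,B'|_S)$ is more explicit than the paper's, which simply notes that the log canonical centers of $(S,B'|_S)$ are restrictions of those of $(Y,B)$.
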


\begin{proof}
  Write $\BB=\BB_1+\BB'$ where $\BB_1$ is irreducible and consider the
  following short exact sequence,
  $$
  0\to \omega_Y(\BB')\otimes \sL\to \omega_Y(\BB)\otimes \sL \to \omega_{\BB_1}(
  \BB'\resto{\BB_1})\otimes \sL\resto{\BB_1} \to 0.
  $$

  Notice that the log canonical centers of $(\BB_1, \BB'\resto{\BB_1})$ are
  restrictions of the log canonical centers of $(Y,\BB)$. Therefore, using the long
  exact cohomology sequence of $\myR \phi_*$ and induction on the number of
  components of $\BB$ and the dimension of $Y$ this reduces the statement to the
  usual Kawamata-Viehweg vanishing theorem.
\end{proof}

\begin{proof}[Proof of \eqref{thm:rel-kv-vanish}]
  Since $\phi:(Y,\DD_Y)\to (X,\DD)$ is a log resolution, it follows that $\DD_Y+E$ is
  supported on an snc divisor where $E=\exc(\phi)$ is the exceptional divisor of
  $\phi$. Therefore any log canonical center of $(Y,\BB)$, in other words any
  intersection of the irreducible components of $\BB$, intersects $E$ transversally.
  In particular $\sO_Y$ is $\phi$-big on any log canonical center of $(Y,\BB)$ and
  hence the statement follows from \eqref{lem:GR}.
\end{proof}

\begin{cor}\label{cor:rtl-pairs-0}
  Let $(X,D)$ be a rational pair and $\phi:(Y,\DD_Y)\to (X,\DD)$ a rational
  resolution. 
  \begin{enumerate}
  \item $\sO_X(-\DD)\simeq \phi_*\sO_Y(-{\DD_Y})$, i.e., $(X,\DD)$ is
    normal,
  \item $\myR^i\phi_*\sO_Y(-{\DD_Y})=0$ for $i>0$, 
  \item $\myR^i\phi_*\omega_Y({\DD_Y})=0$ for $i>0$. 
    \label{item:rtl-pairs-iv}
  \end{enumerate}
\end{cor}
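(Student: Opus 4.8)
The plan is to read off (1) and (2) directly from the defining quasi-isomorphism of a rational resolution, and to obtain (3) as a special case of the Grauert--Riemenschneider vanishing for pairs proved in \eqref{thm:rel-kv-vanish}. The genuine content is thus entirely front-loaded into \eqref{thm:rel-kv-vanish}, and the corollary itself is essentially bookkeeping.

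First I would unwind the hypothesis. Since $\phi$ is a rational resolution of the rational pair $(X,\DD)$, by \eqref{def:rat-sing-pairs} there is a quasi-isomorphism
$$
\sO_X(-\DD)\qis\myR\phi_*\sO_Y(-{\DD_Y}).
$$
Taking cohomology sheaves of both sides, and using that the left-hand side is a sheaf concentrated in degree $0$, the induced isomorphism on $h^0$ reads $\sO_X(-\DD)\simeq \phi_*\sO_Y(-{\DD_Y})$, which is (1). Moreover this isomorphism is exactly the natural map $\phi^\#$, so the log resolution $\phi$ exhibits $(X,\DD)$ as a normal pair in the sense of \eqref{def:normal-pair}, giving the parenthetical normality assertion. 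The isomorphism on $h^i$ for $i>0$ gives $\myR^i\phi_*\sO_Y(-{\DD_Y})=h^i\big(\myR\phi_*\sO_Y(-{\DD_Y})\big)=0$, which is (2). Neither of these uses anything beyond the definition of a rational resolution.

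For (3) I would invoke \eqref{thm:rel-kv-vanish}. A rational resolution is in particular a log resolution, so the standing hypotheses of that theorem are met; the only point requiring a word is that $B\leteq\DD_Y$ is an admissible choice for the effective reduced integral divisor appearing there. Since $(X,\DD)$ is a reduced pair, $\DD$ is a reduced integral divisor, and its strict transform $\DD_Y=\phi^{-1}_*\DD$ is again effective, reduced, and integral, because strict transforms of reduced divisors are reduced. Hence $B=\DD_Y\leq\DD_Y$ qualifies, and applying \eqref{thm:rel-kv-vanish} with this $B$ yields $\myR^i\phi_*\omega_Y(\DD_Y)=0$ for $i>0$, which is precisely (3).

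I do not expect any real obstacle here. The one thing worth flagging is that (3) does not actually use the rationality hypothesis at all: it holds for every log resolution, being a restatement of the vanishing theorem with $B=\DD_Y$, whereas (1) and (2) are merely the definition of a rational resolution decoded one cohomology sheaf at a time. The sole verification deserving a remark is the reducedness of $\DD_Y$, which secures the applicability of \eqref{thm:rel-kv-vanish}.
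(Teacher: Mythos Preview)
Your proposal is correct and matches the paper's intended argument: the corollary is stated without proof, immediately following \eqref{thm:rel-kv-vanish}, precisely because (1) and (2) are the cohomology sheaves of the defining quasi-isomorphism and (3) is the case $B=\DD_Y$ of that theorem. Your observation that (3) requires only the log resolution hypothesis, not rationality, is also accurate.
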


\begin{thm}
  \label{thm:rtl-pairs-are-rtl}
  Let $(X,\DD)$ be a rational pair and $\phi:(Y,\DD_Y)\to (X,\DD)$ a rational
  resolution.  Then 
  \begin{enumerate}
  \item $\phi_*\omega_Y(\DD_Y)\simeq \omega_X(D)\leteq\sHom_X(\sO_X(-\DD),\omega_X)$,
    and \label{item:rtl-is-rtl1}
  \item $\phi_*\omega_Y\simeq \omega_X$. \label{item:rtl-is-rtl2}
  \end{enumerate}
\end{thm}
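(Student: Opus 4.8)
The plan is to run both parts through Grothendieck duality for the proper birational morphism $\phi$, fed by the two vanishing statements recorded in \eqref{cor:rtl-pairs-0}. Since $Y$ is smooth of dimension $n=\dim X$ its dualizing complex is $\omega_Y^\mydot\qis\omega_Y[n]$, and $\phi^!\omega_X^\mydot\qis\omega_Y^\mydot$, so duality reads
$$
\myR\phi_*\,\myR\sHom_Y(\sF,\omega_Y^\mydot)\qis \myR\sHom_X(\myR\phi_*\sF,\omega_X^\mydot)
$$
for coherent $\sF$ on $Y$.

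For \eqref{item:rtl-is-rtl1} I would take $\sF=\sO_Y(-\DD_Y)$. As $\sF$ is invertible the left-hand side is $\myR\phi_*\big(\omega_Y(\DD_Y)[n]\big)$, which collapses to $\phi_*\omega_Y(\DD_Y)[n]$ by the vanishing $\myR^i\phi_*\omega_Y(\DD_Y)=0$ $(i>0)$ of \eqref{item:rtl-pairs-iv}. On the right-hand side I would invoke the defining quasi-isomorphism of a rational pair, $\myR\phi_*\sO_Y(-\DD_Y)\qis\sO_X(-\DD)$, to obtain $\myR\sHom_X(\sO_X(-\DD),\omega_X^\mydot)$. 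Equating the two sides and reading off $h^{-n}$ gives (1), provided one knows that the bottom cohomology of the dualizing functor is the naive dual into $\omega_X$, i.e. $\sExt_X^{-n}(\sO_X(-\DD),\omega_X^\mydot)\iso\sHom_X(\sO_X(-\DD),\omega_X)=\omega_X(D)$. This is the one point needing care: in the spectral sequence $\sExt_X^{p}(\sO_X(-\DD),h^q(\omega_X^\mydot))\Rightarrow\sExt_X^{p+q}$ the corner $(p,q)=(0,-n)$ is the unique term of total degree $-n$ and carries no incoming or outgoing differential, so it survives. As a byproduct the left-hand side is concentrated in degree $-n$, forcing $\sExt_X^{j}(\sO_X(-\DD),\omega_X^\mydot)=0$ for $j\neq -n$, which is exactly the Cohen–Macaulayness of $\sO_X(-\DD)$ of \eqref{thm:main-1}.

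For \eqref{item:rtl-is-rtl2} I would \emph{not} rerun duality with $\sF=\sO_Y$, since that needs $\myR\phi_*\sO_Y\qis\sO_X$, i.e. rational singularities of $X$, which is precisely what is unavailable. Instead I would derive (2) from (1) by an order-of-vanishing count, viewing $\phi_*\omega_Y$, $\omega_X$, $\omega_X(D)$ and $\phi_*\omega_Y(\DD_Y)$ as subsheaves of the constant sheaf of rational $n$-forms. The decisive structural fact is that $\DD_Y=\phi^{-1}_*\DD$ is a strict transform, hence has no $\phi$-exceptional component. Thus a rational form lying in $\phi_*\omega_Y(\DD_Y)$ already has non-negative order along every exceptional divisor (and order $\ge -1$ along the components of $\DD_Y$), while lying in $\omega_X$ forces non-negative order along every non-exceptional prime divisor, in particular along the components of $\DD_Y$; here one uses that $\phi$ is an isomorphism at the generic points of $\DD$ (so orders along $\DD_{Y,i}$ and $\DD_i$ agree) and that $X$ is smooth in codimension one, a rational pair being in particular a normal pair. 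Intersecting the two conditions makes the form regular along every prime divisor of the smooth $Y$, hence regular, yielding the equality of subsheaves
$$
\phi_*\omega_Y=\omega_X\cap\phi_*\omega_Y(\DD_Y)=\omega_X\cap\omega_X(D)=\omega_X,
$$
where the middle equality is \eqref{item:rtl-is-rtl1} and the last is $\omega_X\subseteq\omega_X(D)$ since $\DD$ is effective.

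I expect the main obstacle to sit entirely inside (1): combining duality with the two vanishings and, above all, justifying the passage to $h^{-n}$ so that the abstract object $\myR\sHom_X(\sO_X(-\DD),\omega_X^\mydot)$ is correctly identified with the concrete reflexive sheaf $\omega_X(D)$. Once (1) holds, (2) is the short bookkeeping above; its only real subtlety is conceptual, namely that $\phi_*\omega_Y=\omega_X$ genuinely fails for general $X$ (e.g. non-canonical Gorenstein singularities), so the rational-pair hypothesis must be used — and it is, precisely through (1) and through the exceptional-freeness of $\DD_Y$.
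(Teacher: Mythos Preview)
Your argument for \eqref{item:rtl-is-rtl1} is the paper's argument, including the duality chain $(\star)$; the spectral-sequence corner computation you add is exactly the content of the paper's phrase ``immediately implies.'' For \eqref{item:rtl-is-rtl2} you take a genuinely different route. The paper dualizes the short exact sequence $0\to\sO_X(-\DD)\to\sO_X\to\sO_\DD\to 0$ to obtain $0\to\omega_X\to\phi_*\omega_Y(\DD_Y)\to\omega_\DD$, pushes forward the adjunction sequence $0\to\omega_Y\to\omega_Y(\DD_Y)\to\omega_{\DD_Y}\to 0$ from $Y$, and compares the two in a commutative diagram with $\beta$ the identity; a diagram chase then forces $\alpha\colon\phi_*\omega_Y\to\omega_X$ to be an isomorphism. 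Your valuative computation $\phi_*\omega_Y=\omega_X\cap\phi_*\omega_Y(\DD_Y)$ bypasses $\omega_\DD$ and $\omega_{\DD_Y}$ entirely; it is shorter and makes the role of the hypothesis ``$\DD_Y$ is a strict transform, hence exceptional-free'' completely transparent. The paper's approach, on the other hand, yields as a byproduct that the image of $\phi_*\omega_Y(\DD_Y)\to\omega_\DD$ is exactly $\phi_*\omega_{\DD_Y}$, and is arguably more robust in settings where one cannot freely invoke divisorial valuations on $X$. One small point worth making explicit in your write-up: the embedding $\omega_X\hookrightarrow\omega_X(D)=\phi_*\omega_Y(\DD_Y)$, which you use when taking the intersection, is precisely the injectivity at the left of the paper's sequence $0\to\omega_X\to\phi_*\omega_Y(\DD_Y)\to\omega_\DD$; this holds because $h^{-n}(\omega_\DD^\mydot)=0$, and does not require knowing in advance that $\omega_X$ is torsion-free.
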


\begin{proof} 
  Let $n=\dim X$. Then Grothendieck duality and
  (\ref{cor:rtl-pairs-0}.\ref{item:rtl-pairs-iv}) yields that
  \begin{multline}\tag{$\star$}
    \myR\sHom_X(\sO_X(-\DD),\omega_X^\mydot)\qis
    \myR\sHom_X(\myR\phi_*\sO_Y(-\DD_Y),\omega_X^\mydot)\qis \\ \qis \myR
    \phi_*\myR\sHom_Y(\sO_Y(-\DD_Y), \omega_Y^\mydot)\qis \myR
    \phi_*\omega_Y(\DD_Y)[n] \qis \phi_*\omega_Y(\DD_Y)[n]
  \end{multline}
  and 
  \begin{equation*}
    \tag{$\star\star$}
    \omega_D^\mydot \qis  \myR\sHom_X(\sO_{\DD},\omega_X^\mydot).
  \end{equation*}
  Observe that $(\star)$ immediately implies
  (\ref{thm:rtl-pairs-are-rtl}.\ref{item:rtl-is-rtl1}). 
  Consider the short exact sequence
  $$0\to \sO_X(-\DD)\to \sO_X \to \sO_{\DD}\to 0$$
  and apply the functor $\myR\sHom_X(\__,\omega_X^\mydot)$ to obtain the
  distinguished triangle 
  \begin{equation*}
    \xymatrix{%
      \myR\sHom_X(\sO_{\DD},\omega_X^\mydot)
      \ar[r] & \omega_X^\mydot \ar[r] &
      \myR\sHom_X(\sO_X(-\DD),\omega_X^\mydot) \ar[r]^-{+1}&. 
    }
  \end{equation*}
  By $(\star)$ and $(\star\star)$ this is the same as 
  \begin{equation*}
    \xymatrix{%
      \omega_{\DD}^\mydot
      \ar[r] & \omega_X^\mydot \ar[r] &
      \phi_*\omega_Y(\DD_Y)[n]
      \ar[r]^-{+1}&. 
    }
  \end{equation*}
  Next consider the long exact sequence of cohomology sheaves induced by this
  distinguished triangle:
  $$
  0\to \omega_X \to \phi_*\omega_Y(\DD_Y) \to \omega_{\DD} \to\dots
  $$
  Since $Y$ is smooth 
  we have a similar short exact sequence on $Y$.
  $$
  0\to \omega_Y \to \omega_Y(\DD_Y) \to \omega_{\DD_Y} \to 0
  $$
  Applying $\phi_*$ and Grauert-Riemenschneider vanishing one obtains a commutative
  diagram of exact sequences:
  $$
  \xymatrix{%
    0 \ar[r] & \phi_*\omega_Y\ar[d]_\alpha \ar[r] & \phi_*\omega_Y({\DD_Y})
    \ar[r]\ar[d]_\beta^\simeq & \phi_*\omega_{\DD_Y} \ar[r] \ar[d]_\gamma & 0\\
    0 \ar[r] & \omega_X\ar[r] & \phi_*\omega_Y({\DD_Y}) \ar[r]^-\tau & \omega_{\DD}  }
  $$
  Since $\phi$ is an isomorphism at the general points of $Y$ and the irreducible
  components of $\DD_Y$ and since $\omega_Y$ and $\omega_{\DD_Y}$ are torsion-free,
  it follows that both $\alpha$ and $\gamma$ are injective. Then the fact that
  $\beta$ is an isomorphism implies that the image of $\tau$ is
  $\phi_*\omega_{\DD_Y}\subseteq \omega_D$. However, that implies that then $\ker
  \tau \simeq \phi_*\omega_Y$ and hence $\alpha$ is an isomorphism.  
\end{proof}

\begin{cor}[\protect{\cite[3.20]{MR2492473}}]
  Let $(X,\DD)$ be a rational pair.  Assume that $\DD$ is a Cartier divisor.  Then
  $X$ has only rational singularities and in particular it is Cohen-Macaulay.
\end{cor}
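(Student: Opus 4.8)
The plan is to establish the three assertions in the order normal $\Rightarrow$ Cohen-Macaulay $\Rightarrow$ rational, fixing once and for all a rational resolution $\phi\colon(Y,\DD_Y)\to(X,\DD)$; the Cartier hypothesis enters precisely in the first two steps, after which rationality is a formal consequence of Grothendieck duality. First I would observe that by \eqref{cor:rtl-pairs-0} the pair $(X,\DD)$ is normal, and since $\DD$ is Cartier the implication for Cartier $\DD$ in \eqref{prop:normal-reduced-is-a-normal-pair} shows that $X$ itself is normal. As $X$ is irreducible this makes it an integral scheme of pure dimension $n\leteq\dim X$, so that the criterion ``$X$ is CM iff $\omega_X^\mydot\qis\omega_X[n]$'' (cf.\ \cite[3.5.1]{Conrad00}) becomes applicable.

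For Cohen-Macaulayness I would reuse the duality computation $(\star)$ from the proof of \eqref{thm:rtl-pairs-are-rtl}: dualizing the defining quasi-isomorphism $\sO_X(-\DD)\qis\myR\phi_*\sO_Y(-\DD_Y)$ and applying Grothendieck duality together with Grauert-Riemenschneider vanishing for pairs yields
$$\myR\sHom_X(\sO_X(-\DD),\omega_X^\mydot)\qis\phi_*\omega_Y(\DD_Y)[n],$$
whose right-hand side is a single coherent sheaf placed in degree $-n$. Here is where $\DD$ being Cartier is indispensable: then $\sO_X(-\DD)$ is invertible, the left-hand side equals $\sO_X(\DD)\otimes\omega_X^\mydot$, and tensoring back by $\sO_X(-\DD)$ transfers the concentration in degree $-n$ to $\omega_X^\mydot$ itself, i.e.\ $\omega_X^\mydot\qis\omega_X[n]$. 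By the criterion above, $X$ is CM. I expect this to be the only genuinely delicate point: without the Cartier assumption $\sO_X(-\DD)$ need not be a line bundle, so one cannot pass from the dual of $\sO_X(-\DD)$ being a shifted sheaf to $\omega_X^\mydot$ being one---matching the earlier remark that a rational pair need not force rational singularities on $X$ in general.

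Finally, with $X$ now known to be CM (so $\omega_X^\mydot\qis\omega_X[n]$) I would obtain rationality by a second application of Grothendieck duality to $\phi$. Since $Y$ is smooth, $\myR\sHom_Y(\sO_Y,\omega_Y^\mydot)\qis\omega_Y[n]$, so duality gives
$$\myR\sHom_X(\myR\phi_*\sO_Y,\omega_X^\mydot)\qis\myR\phi_*\omega_Y[n].$$
By \eqref{thm:rel-kv-vanish} with $B=0$ one has $\myR^i\phi_*\omega_Y=0$ for $i>0$, and $\phi_*\omega_Y\simeq\omega_X$ by \eqref{thm:rtl-pairs-are-rtl}, so the right-hand side is $\omega_X[n]\qis\omega_X^\mydot$. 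Thus $\myR\sHom_X(\myR\phi_*\sO_Y,\omega_X^\mydot)\qis\omega_X^\mydot$, and applying $\myR\sHom_X(\__,\omega_X^\mydot)$ once more---using biduality against the dualizing complex for the bounded coherent complex $\myR\phi_*\sO_Y$ and $\myR\sHom_X(\omega_X^\mydot,\omega_X^\mydot)\qis\sO_X$---collapses this to $\sO_X\qis\myR\phi_*\sO_Y$. By \eqref{def:rtl-sing} this says exactly that $X$ has rational singularities, and Cohen-Macaulayness was established along the way.
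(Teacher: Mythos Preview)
Your argument is correct and follows the same line as the paper's primary proof: normality from \eqref{prop:normal-reduced-is-a-normal-pair}, Cohen--Macaulayness by tensoring $(\star)$ with the line bundle $\sO_X(-\DD)$, and rationality from $\phi_*\omega_Y\simeq\omega_X$ together with the CM property. The only cosmetic difference is that where the paper simply invokes ``Kempf's criterion'' for the last step, you have written out its proof via Grothendieck duality and biduality; the content is identical.

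It may interest you that the paper also records a second, shorter route to rationality that bypasses the duality computation in your final paragraph: since $\phi^*\DD-\DD_Y$ is effective and $\phi$-exceptional, the quasi-isomorphism $\sO_X(-\DD)\to\myR\phi_*\sO_Y(-\DD_Y)$ factors through $\myR\phi_*\phi^*\sO_X(-\DD)$, and twisting by the line bundle $\sO_X(\DD)$ shows that $\sO_X\to\myR\phi_*\sO_Y$ admits a left inverse; rationality then follows from the splitting characterization of \cite[Thm.~1]{Kovacs00b}. Your approach has the advantage of being self-contained, while the splitting argument avoids having to first establish CM and re-dualize.
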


\begin{proof}
  If $\DD$ is Cartier, then $h^i(\omega_X^\mydot)=0$ for $i\neq\dim X$ by $(\star)$
  and hence $X$ is Cohen-Macaulay.  It also follows that $X$ is normal by
  (\ref{prop:normal-reduced-is-a-normal-pair}.\ref{item:normal2}) and hence satisfies
  Kempf's criterion for rational singularities by
  (\ref{thm:rtl-pairs-are-rtl}.\ref{item:rtl-is-rtl2}).

  This proves the statement, but actually one can give a simple direct proof:

  Let $\phi:(Y,\DD_Y)\to (X,\DD)$ be a rational resolution and $n=\dim X$.  By
  assumption the natural map $\sO_X(-\DD)\to \myR\phi_*\sO_Y(-\DD_Y)$ is a
  quasi-isomorphism and since $\phi^*\DD-\DD_Y$ is an effective (exceptional) divisor
  it factors the following way:
  $$
  \xymatrix{%
    \sO_X(-\DD)\ar[r]\ar@/_1.5pc/[rr]_\qis & \myR\phi_*\phi^*\sO_X(-\DD) \ar[r] &
    \myR\phi_*\sO_Y(-\DD_Y). 
  }
  $$
  Twisting by $\sO_X(\DD)$ implies that the natural morphism $\sO_X\to
  \myR\phi_*\sO_Y$ has a left inverse. Then $X$ has rational singularities by
  \cite[Thm.~1]{Kovacs00b}.
\end{proof}

\section{Depth and duality}

The main statement in this section is a simple reformulation of Grothendieck's
vanishing and non-vanishing theorems of certain local cohomology groups
characterizing depth in terms of similar vanishing and non-vanishing involving the
dualizing complex. 

Throught the article dualizing complexes will be considered \emph{normalized} so if
$X$ is generically non-reduced then $\supp h^i(\omega_X^\mydot)\subseteq \Sing X$ for
$i\neq -\dim X$.

  First we need an auxiliary result on the localization of dualizing complexes. This
  is undoubtedly known to experts. A proof is included for the benefit of the
  uninitiated reader.

\begin{lem}
  \label{lem:localizing-omega}
  Let $X$ be a scheme that admits a dualizing complex $\omega_X^\mydot$ (this holds
  for instance if $X$ is of finite type over a field).  Let $X_x\simeq \Spec
  \sO_{X,x}$ denote the local scheme of $X$ at $x$. Then $X_x$ admits a dualizing
  complex and $$\omega_{X_x}^\mydot\simeq \omega_X^\mydot\otimes \sO_{X,x}[-\dim x].$$
\end{lem}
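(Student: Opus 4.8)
The plan is to first show that the localized complex $\omega_X^\mydot\otimes\sO_{X,x}$ is itself a dualizing complex on $X_x$, and then to determine the shift relating it to the \emph{normalized} dualizing complex $\omega_{X_x}^\mydot$. Since the assertion is local I may assume $X=\Spec A$ with $A$ noetherian (indeed of finite type over a field), that $x$ corresponds to a prime $\frp\subseteq A$, and that $\sO_{X,x}=A_\frp$. Writing $\iota\colon X_x\to X$ for the canonical flat morphism, I have $\omega_X^\mydot\otimes\sO_{X,x}\simeq\iota^*\omega_X^\mydot$, computed without deriving because localization is exact.

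Next I would verify the defining properties of a dualizing complex for $\iota^*\omega_X^\mydot$, each of which is inherited under the flat localization $A\to A_\frp$. The cohomology sheaves of $\omega_X^\mydot$ are coherent and bounded, and localization carries finitely generated modules to finitely generated modules, so $\iota^*\omega_X^\mydot\in D^b_{\rm coh}(X_x)$. A bounded complex of injective $\sO_X$-modules computing $\omega_X^\mydot$ localizes, term by term, to a bounded complex of injective $A_\frp$-modules, since localizations of injectives are injective over a noetherian ring; hence $\iota^*\omega_X^\mydot$ has finite injective dimension. Finally, the biduality isomorphism $\sO_X\to\myR\sHom_X(\omega_X^\mydot,\omega_X^\mydot)$ localizes to the corresponding map on $X_x$, because $\sExt$ of coherent complexes commutes with localization over a noetherian ring. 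Thus $\iota^*\omega_X^\mydot$ is a dualizing complex, and in particular $X_x$ admits one.

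It then remains to pin down the shift. Because $X_x$ is local it is connected with trivial Picard group, so any two dualizing complexes on it differ only by a shift, and $\omega_{X_x}^\mydot\simeq\big(\omega_X^\mydot\otimes\sO_{X,x}\big)[s]$ for a unique integer $s$. To compute $s$ I would compare normalizations via the dimension functions attached to the two complexes: the normalized $\omega_X^\mydot$ has associated dimension function $y\mapsto\dim\overline{\{y\}}$, while the normalized $\omega_{X_x}^\mydot$ has associated function $y\mapsto\dim_{X_x}\overline{\{y\}}$. For every $y\in X_x$, i.e.\ every generization of $x$, catenariness of $X$ gives $\dim\overline{\{y\}}=\dim_{X_x}\overline{\{y\}}+\dim x$, so the two functions differ by the constant $\dim x$; equivalently, at the level of the top nonvanishing cohomology this is the passage from degree $-\dim X$ to degree $-\dim X_x=-\codim_X x=-(\dim X-\dim x)$. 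Either bookkeeping forces $s=-\dim x$, which is the asserted formula. The hard part, and the step I would treat most carefully, is exactly this normalization bookkeeping: the shift is dictated by the conventions on $\omega_X^\mydot$ and $\omega_{X_x}^\mydot$ together with the catenary relation $\dim X=\dim x+\codim_X x$, without which the comparison of top degrees would be ambiguous for non-equidimensional $X$.
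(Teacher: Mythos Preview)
Your argument is correct but follows a different route from the paper. You verify directly that the localized complex satisfies the axioms for a dualizing complex on $X_x$ and then determine the shift by comparing the dimension functions attached to the two normalizations, invoking catenariness and the identity $\dim X=\dim x+\codim_X x$. The paper instead appeals to Kawasaki's theorem to embed $X$ as a closed subscheme $j\colon X\hookrightarrow Y$ of a Gorenstein scheme, so that $\omega_X^\mydot\qis\myR\sHom_Y(\sO_X,\omega_Y)[N]$ with $N=\dim Y$; localizing this identity and using that the Gorenstein local ring $\sO_{Y,x}$ has normalized dualizing complex $\omega_{Y,x}[m]$ with $m=\dim\sO_{Y,x}=N-\dim x$, the shift drops out of a one-line Grothendieck-duality computation. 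Your approach is more intrinsic and works for any scheme admitting a dualizing complex without the auxiliary embedding, at the price of the normalization bookkeeping you rightly flag as the delicate step; the paper's approach trades that bookkeeping for the external input of the Gorenstein embedding, which offloads the dimension accounting to the ambient Cohen--Macaulay scheme where $\dim Y=\dim x+\dim\sO_{Y,x}$ is automatic.
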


\begin{proof}
  As the statement is local we may assume that $X$ is embedded into a Gorenstein
  scheme as a closed subscheme by \cite[1.4]{MR1859029}.  Let $j:X\into Y$ be such an
  embedding, $N=\dim Y$ and $m= \codim_Y x= \dim \sO_{Y,x}$.  Then by Grothendieck
  duality and because $Y$ is CM and $j$ is a closed embedding,
  \begin{multline*}
    \omega_X^\mydot\qis \myR\sHom _X(\sO_X,\omega_X^\mydot)\qis \myR j_*\myR\sHom
    _X(\sO_X,\omega_X^\mydot)\qis \\
    \underbrace{\qis}_{\hskip-40pt\text{Grothendieck duality}\hskip-40pt} \myR\sHom
    _Y(\myR j_*\sO_X,\underbrace{\omega_Y^\mydot}_{\hskip-40pt\text{dualizing
        complex}\hskip-40pt})\qis \myR\sHom
    _Y(\sO_X,\underbrace{\omega_Y}_{\hskip-40pt\text{dualizing sheaf}\hskip-40pt})[N].
  \end{multline*}
  Then taking cohomology and localizing at $x$ gives that 
  \begin{multline*}
    \omega_X^\mydot\otimes \sO_{X,x}[-\dim x]
    \qis \big(\omega_X^\mydot\big)_x[-\dim x]\qis
    \\ \qis
    \big(\myR\sHom _Y(\sO_X,\omega_Y)[N-\dim x]\big)_x\qis 
    \myR\sHom _{Y_x}(\sO_{X,x},\omega_{Y,x}[m]) \qis 
    \\ \qis
    \myR\sHom _{Y_x}(\myR(j_x)_*\sO_{X,x},\omega_{Y,x}^\mydot)\qis 
    \myR(j_x)_*\myR\sHom _{X_x}(\sO_{X,x},\omega_{X,x}^\mydot)\qis
    \omega_{X,x}^\mydot
  \end{multline*}

\end{proof}

\begin{prop}\label{thm:depth-ext-criterion}
  Let $X$ be a scheme that admits a dualizing complex $\omega_X^\mydot$.
  Let $x\in X$ be a point and $\sF$ a coherent sheaf on $X$. Let $d=\dim_x \sF + \dim
  x$ and $t=\depth_x\sF+\dim x$.
  Then 
  $$
  \big(\sExt^{-i}_X(\sF,\omega_X^\mydot)\big)_x=0\qquad \text{ for $i>d$ and
    $i<t$.} 
  $$
  Furthermore, if $\sF_x\neq 0$, then
  $$
  \big(\sExt^{-d}_X(\sF,\omega_X^\mydot)\big)_x\neq 0, 
  \quad\text{and}\quad
  \big(\sExt^{-t}_X(\sF,\omega_X^\mydot)\big)_x\neq 0, 
  $$
\end{prop}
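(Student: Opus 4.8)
The plan is to reduce the assertion to a statement over the local ring $\sO_{X,x}$ and then recognize it as a repackaging of Grothendieck's local duality together with the classical vanishing and non-vanishing theorems for local cohomology. Since $\sF$ is coherent and $X$ is noetherian, the formation of $\sExt$ commutes with localization, so
$$
\big(\sExt^{-i}_X(\sF,\omega_X^\mydot)\big)_x \simeq \Ext^{-i}_{\sO_{X,x}}\big(\sF_x,(\omega_X^\mydot)_x\big).
$$
By Lemma~\ref{lem:localizing-omega} we have $(\omega_X^\mydot)_x\simeq \omega_{X_x}^\mydot[\dim x]$, so writing $R=\sO_{X,x}$, $M=\sF_x$, and $j=i-\dim x$, this becomes $\Ext^{-j}_R(M,\omega_R^\mydot)$, where $\omega_R^\mydot=\omega_{X_x}^\mydot$ is the normalized dualizing complex of $R$. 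On the local scheme $X_x$ the point $x$ is the closed point, whence $\dim_x\sF=\dim M$ and $\depth_x\sF=\depth M$; the shift by $\dim x$ therefore matches the summand $\dim x$ in the definitions of $d$ and $t$ exactly, turning the conditions $i>d$, $i<t$, $i=d$, $i=t$ into $j>\dim M$, $j<\depth M$, $j=\dim M$, $j=\depth M$ respectively.

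It then suffices to prove the corresponding statement over $R$. For this I would invoke Grothendieck's local duality, which for the normalized dualizing complex yields a functorial isomorphism
$$
H^{j}_{\maxid}(M)\simeq \Hom_R\big(\Ext^{-j}_R(M,\omega_R^\mydot),E\big),
$$
where $\maxid$ is the maximal ideal of $R$ and $E$ the injective hull of the residue field. As Matlis duality $\Hom_R(\blank,E)$ is exact and faithful, the $R$-module $\Ext^{-j}_R(M,\omega_R^\mydot)$ vanishes if and only if $H^{j}_{\maxid}(M)$ does, and is nonzero exactly when the latter is.

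Finally I would quote the basic facts about local cohomology: Grothendieck's vanishing theorem gives $H^{j}_{\maxid}(M)=0$ for $j>\dim M$, the cohomological characterization of depth gives $H^{j}_{\maxid}(M)=0$ for $j<\depth M$, and for $M\neq 0$ one has the non-vanishing $H^{\dim M}_{\maxid}(M)\neq 0$ and $H^{\depth M}_{\maxid}(M)\neq 0$. Transporting these four statements back through the two isomorphisms above proves the proposition. The computation is routine once it is set up; the only point requiring care is the first step, namely tracking the shift $[\dim x]$ from Lemma~\ref{lem:localizing-omega} against the $\dim x$ in $d$ and $t$, and confirming that the localized complex $\omega_{X_x}^\mydot$ is genuinely the \emph{normalized} dualizing complex of $R$, so that local duality applies with precisely the indexing used above.
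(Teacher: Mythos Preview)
Your proposal is correct and follows essentially the same route as the paper: localize the $\sExt$ using Lemma~\ref{lem:localizing-omega} to shift by $\dim x$, then identify the resulting local $\Ext$ with the Matlis dual of local cohomology via Grothendieck's local duality, and conclude by the standard vanishing and non-vanishing of $H^{j}_{\maxid}(M)$ in terms of $\dim M$ and $\depth M$. The paper's proof is the same argument, only more tersely stated (citing \cite[V.6.2]{MR0222093} for local duality and \cite[3.5.7]{MR1251956} for the local cohomology bounds).
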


\begin{proof}
  We may obviously assume that $\sF\neq 0$.  Localization is exact and commutes with
  the $\sHom$ functor, so $\big(\sExt^{-i}_X(\sF,\omega_X^\mydot)\big)_x\simeq
  \sExt^{-i}_{X_x}\left(\sF_x,\big(\omega_X^\mydot\big)_x\right)$ and then the latter
  group is isomorphic to $\sExt^{\dim
    x-i}_{X_x}\left(\sF_x,\omega_{X_x}^\mydot\right)$.  This is the Matlis dual of
  $H^{i-\dim x}_x(Y,\sF)$ by \cite[V.6.2]{MR0222093}.  Therefore we obtain that
  $$
  \big(\sExt^{-i}_X(\sF,\omega_X^\mydot)\big)_x=0 \quad\Leftrightarrow\quad
  H^{i-\dim x}_x(Y,\sF)=0
  $$
  and since both $\depth_x\sF$ and $\dim_x\sF$ remain the same over $Y$, the
  statement follows from Grothendieck's theorem \cite[3.5.7]{MR1251956}.
\end{proof}

\begin{cor}
  \label{cor:CM-criterion}
  Under the same conditions and using the same notation as in
  \eqref{thm:depth-ext-criterion} one has that $\sF$ is CM at $x\in X$ if and only if
  $$
  \big(\sExt^{-i}_X(\sF,\omega_X^\mydot)\big)_x=0\qquad \text{ for $i\neq d= \dim_x
    \sF + \dim x$.}
  $$
\end{cor}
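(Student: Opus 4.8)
The plan is to read the statement off directly from \eqref{thm:depth-ext-criterion}, once the Cohen--Macaulay condition has been rephrased numerically. By definition $\sF$ is CM at $x$ precisely when $\sF_x$ is a Cohen--Macaulay $\sO_{X,x}$-module, that is, when $\depth_x\sF=\dim_x\sF$. Adding $\dim x$ to both sides, this is equivalent to the equality $t=d$ in the notation of \eqref{thm:depth-ext-criterion}. Thus the corollary reduces to showing that the vanishing of $\big(\sExt^{-i}_X(\sF,\omega_X^\mydot)\big)_x$ for all $i\neq d$ is equivalent to $t=d$.

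First I would dispose of the degenerate case: we may assume $\sF_x\neq 0$, since otherwise every local Ext module in sight vanishes and $\sF$ is (vacuously) CM at $x$, so both sides of the asserted equivalence hold. Under the standing assumption $\sF_x\neq 0$, I would then argue the two implications separately. For the forward direction, suppose $\sF$ is CM at $x$, so that $t=d$. The two vanishing ranges furnished by \eqref{thm:depth-ext-criterion}, namely $i>d$ and $i<t$, then together cover exactly the integers $i\neq d$; this is precisely the required vanishing.

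For the converse, assume $\big(\sExt^{-i}_X(\sF,\omega_X^\mydot)\big)_x=0$ for every $i\neq d$. Since $\sF_x\neq 0$, the non-vanishing half of \eqref{thm:depth-ext-criterion} supplies $\big(\sExt^{-t}_X(\sF,\omega_X^\mydot)\big)_x\neq 0$. Comparing this with the hypothesis forces $t=d$, i.e.\ $\depth_x\sF=\dim_x\sF$, which is exactly Cohen--Macaulayness of $\sF$ at $x$.

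There is essentially no deep obstacle here, as all the content is already carried by \eqref{thm:depth-ext-criterion}; the proof is pure bookkeeping. The only points that require a moment's care are the verification that the intervals $i>d$ and $i<t$ exhaust $\{\,i\neq d\,\}$ precisely when $t=d$, and the use of the non-vanishing at $i=t$ to pin down $t$ in the converse. The genuinely mild subtlety is the degenerate case $\sF_x=0$, which must be handled by hand as above since both the non-vanishing statement of \eqref{thm:depth-ext-criterion} and the numerical interpretation of $d$ and $t$ implicitly presume $\sF_x\neq 0$.
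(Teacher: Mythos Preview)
Your proposal is correct and matches the paper's treatment: the paper states the corollary without proof, leaving it as an immediate consequence of \eqref{thm:depth-ext-criterion}, and you have accurately spelled out the implicit bookkeeping (including the harmless degenerate case $\sF_x=0$).
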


\bigskip

\begin{cor}
  \label{cor:main-thm-1}
  Let $(X,\DD)$ be a rational pair. Then $\sO_X(-D)$ is a CM sheaf.
\end{cor}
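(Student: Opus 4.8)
The plan is to combine the duality computation that already appears inside the proof of \eqref{thm:rtl-pairs-are-rtl} with the Ext-characterization of the Cohen--Macaulay property in \eqref{cor:CM-criterion}. The key observation is that for a rational pair the complex $\myR\sHom_X(\sO_X(-\DD),\omega_X^\mydot)$ is concentrated in a single cohomological degree, and by \eqref{cor:CM-criterion} this concentration is precisely the CM condition for $\sO_X(-\DD)$.

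First I would fix a rational resolution $\phi\col(Y,\DD_Y)\to(X,\DD)$ and set $n=\dim X$. Starting from the defining quasi-isomorphism $\sO_X(-\DD)\qis\myR\phi_*\sO_Y(-\DD_Y)$, I would run Grothendieck duality exactly as in the computation $(\star)$ of \eqref{thm:rtl-pairs-are-rtl}: since $Y$ is smooth we have $\omega_Y^\mydot\qis\omega_Y[n]$, and because $\sO_Y(-\DD_Y)$ is invertible, $\myR\sHom_Y(\sO_Y(-\DD_Y),\omega_Y^\mydot)\qis\omega_Y(\DD_Y)[n]$. Pushing forward and invoking the Grauert--Riemenschneider vanishing recorded in $(\ref{cor:rtl-pairs-0}.\ref{item:rtl-pairs-iv})$, namely $\myR^i\phi_*\omega_Y(\DD_Y)=0$ for $i>0$, then yields
$$\myR\sHom_X(\sO_X(-\DD),\omega_X^\mydot)\qis\phi_*\omega_Y(\DD_Y)[n].$$
The right-hand side is an honest sheaf placed in degree $-n$, so I would immediately read off that $\big(\sExt^{-i}_X(\sO_X(-\DD),\omega_X^\mydot)\big)=0$ for every $i\neq n$, with the one surviving Ext sheaf sitting in degree $-n$.

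Next I would pin down the numerical invariant $d$ from \eqref{thm:depth-ext-criterion}. Since $\sO_X(-\DD)$ is a rank-one torsion-free sheaf on the irreducible variety $X$, its support is all of $X$, so $\dim_x\sO_X(-\DD)=\dim\sO_{X,x}=\codim_X x$ at every point $x$. Hence
$$d=\dim_x\sO_X(-\DD)+\dim x=\codim_X x+\dim x=\dim X=n,$$
uniformly in $x$. Comparing this with the vanishing above, the hypothesis of \eqref{cor:CM-criterion} holds at every closed point of $\supp\sO_X(-\DD)=X$, which is exactly the assertion that $\sO_X(-\DD)$ is a CM sheaf.

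The main point is that all the serious work has already been done in \eqref{thm:rtl-pairs-are-rtl}: the content is the single-degree concentration of the dualizing complex of $\sO_X(-\DD)$, after which the corollary is a formal consequence of \eqref{cor:CM-criterion}. The only step demanding care is the bookkeeping that matches the concentration degree $-n$ of the dual complex with the value $d=n$ coming from the identity $\codim_X x+\dim x=\dim X$; this is where I would be most cautious, since an off-by-one in the shift convention would be the only way to derail an otherwise immediate argument.
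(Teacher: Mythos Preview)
Your argument is correct and follows essentially the same route as the paper: both run Grothendieck duality on the defining quasi-isomorphism of a rational resolution, invoke the Grauert--Riemenschneider-type vanishing \eqref{thm:rel-kv-vanish} (equivalently (\ref{cor:rtl-pairs-0}.\ref{item:rtl-pairs-iv})) to see that $\myR\sHom_X(\sO_X(-\DD),\omega_X^\mydot)$ is concentrated in degree $-\dim X$, and then conclude via \eqref{cor:CM-criterion}. Your extra care in verifying $d=\dim X$ is a welcome sanity check but not a genuine point of divergence.
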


\begin{proof}
  Consider a rational log resolution $\phi:(\wt X,\wt D)\to (X,\DD)$ (cf.\
  \eqref{def:rat-sing-pairs}).  Using the assumption and Grothendieck duality we
  obtain the following:
  \begin{multline*}
    \myR\sHom_X(\sO_X(-D),\omega_X^\mydot)\qis %
    \myR\sHom_{X}(\myR\phi_*\sO_{\wt X}(-\wt D),\omega_X^\mydot) \qis \\ \qis %
    \myR\phi_*\myR\sHom_{\wt X}(\sO_{\wt X}(-\wt D),\omega_{\wt X}^\mydot) \qis %
    \myR\phi_*\omega_{\wt X}^\mydot(\wt D) \qis \phi_*\omega_{\wt X}(\wt D)[\dim
    X].
  \end{multline*}
  This implies that
  $$
  \sExt^{-i}_X(\sO_X(-D),\omega_X^\mydot)=0\qquad \text{ for $i\neq \dim X$}
  $$
  and hence the statement follows by \eqref{cor:CM-criterion}.
\end{proof}

\begin{cor}\label{cor:dlt}
  Let $(X,\Delta)$ be a dlt pair. Then for any effective integral divisor $D\leq
  \rdown{\Delta}$ $\sO_X(-D)$ is a CM sheaf.
\end{cor}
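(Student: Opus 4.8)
The plan is to deduce the statement from Corollary~\ref{cor:main-thm-1} by exhibiting $(X,D)$ as the floor of a dlt pair and using that the floor of a dlt pair is a rational pair. First I would reduce to the case $D=\rdown{\Delta}$. Given an arbitrary effective integral $D\le \rdown{\Delta}$, set
$$
\Delta'\leteq \Delta-\big(\rdown{\Delta}-D\big).
$$
Here $\rdown{\Delta}-D$ is an effective integral divisor supported on the coefficient-one part of $\Delta$, so $\Delta'$ is again an effective boundary with $0\le\Delta'\le\Delta$; since lowering boundary coefficients preserves the dlt condition, $(X,\Delta')$ is dlt. As subtracting an integral divisor leaves the fractional part unchanged, $\rdown{\Delta'}=\rdown{\Delta}-(\rdown{\Delta}-D)=D$. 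Replacing $(X,\Delta)$ by $(X,\Delta')$, I may therefore assume $D=\rdown{\Delta}$.

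When $X$ is $\bQ$-factorial, Example~\ref{ex:dlt-is-rtl} shows directly that $(X,\rdown{\Delta})=(X,D)$ is a rational pair, and Corollary~\ref{cor:main-thm-1} then gives that $\sO_X(-D)$ is CM, completing the argument. For a general dlt pair I would first pass to a $\bQ$-factorial dlt modification $\pi\colon(\wt X,\wt\Delta)\to(X,\Delta)$, which is small and crepant with $\wt X$ being $\bQ$-factorial. Writing $\wt D=\pi^{-1}_*D$, the $\bQ$-factorial case already yields that $\sO_{\wt X}(-\wt D)$ is CM, and $\pi_*\sO_{\wt X}(-\wt D)=\sO_X(-D)$ because $\pi$ is an isomorphism in codimension one and both sheaves are reflexive of rank one.

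The main obstacle is the descent of the Cohen--Macaulay property along the small birational $\pi$, since depth can a priori drop along small morphisms. I expect to resolve this through Grothendieck duality: as $\sO_{\wt X}(-\wt D)$ is CM upstairs,
$$
\myR\sHom_X\big(\myR\pi_*\sO_{\wt X}(-\wt D),\omega_X^\mydot\big)\qis \myR\pi_*\omega_{\wt X}(\wt D)[\dim X],
$$
so everything reduces to the relative vanishing $\myR^i\pi_*\omega_{\wt X}(\wt D)=0$ for $i>0$ --- the small crepant analogue of the Grauert--Riemenschneider statement in Theorem~\ref{thm:rel-kv-vanish}, and the one genuinely new point to establish. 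Granting it, the right-hand side is a single sheaf placed in degree $-\dim X$; since $\myR^0\pi_*\sO_{\wt X}(-\wt D)=\sO_X(-D)$ and $X$ is itself CM (dlt singularities being rational), this identifies $\myR\pi_*\sO_{\wt X}(-\wt D)$ with $\sO_X(-D)$ and shows, via Corollary~\ref{cor:CM-criterion}, that $\sO_X(-D)$ is CM.
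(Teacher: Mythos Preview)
The paper's proof is a one-liner: it invokes \cite[Thm.~111]{KollarKovacsRP} to assert that $(X,D)$ is a rational pair for \emph{any} dlt pair (no $\bQ$-factoriality hypothesis), and then applies Corollary~\ref{cor:main-thm-1}. Your reduction to $D=\rdown{\Delta}$ is correct but unnecessary given that citation, and your treatment of the $\bQ$-factorial case via Example~\ref{ex:dlt-is-rtl} is fine and matches the paper in spirit.

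The genuine gap is your descent step in the non-$\bQ$-factorial case. Two points:

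\textbf{(a)} The vanishing $\myR^i\pi_*\omega_{\wt X}(\wt D)=0$ for $i>0$ that you single out as ``the one genuinely new point'' is not covered by Theorem~\ref{thm:rel-kv-vanish}: there $\phi$ is a log resolution, so the source is smooth, whereas your $\wt X$ is only $\bQ$-factorial klt. One can prove such a vanishing (it is essentially a relative Kawamata--Viehweg statement for the small crepant map $\pi$, using that $K_{\wt X}+\wt D$ is the pullback of a $\bQ$-Cartier divisor minus an effective klt boundary), but it is real work, not a corollary of anything already in the paper.

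\textbf{(b)} Even granting that vanishing, your last inference is incomplete. Grothendieck duality tells you that $\myR\sHom_X\big(\myR\pi_*\sO_{\wt X}(-\wt D),\omega_X^\mydot\big)$ is a single sheaf in degree $-\dim X$; it does \emph{not} by itself identify $\myR\pi_*\sO_{\wt X}(-\wt D)$ with $\sO_X(-D)$, nor does it compute $\sExt^{-i}_X(\sO_X(-D),\omega_X^\mydot)$. To pass from the former to the latter you would still need to control the higher direct images $\myR^i\pi_*\sO_{\wt X}(-\wt D)$ (or, equivalently via biduality, to know that $\pi_*\omega_{\wt X}(\wt D)$ is itself CM). That is an additional vanishing of the same flavor as (a), and you have not addressed it.

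In short, the detour through a small $\bQ$-factorialization trades one black-box citation for two unproved relative vanishing statements. The cleaner route---and the one the paper takes---is simply to quote the stronger input that $(X,D)$ is already a rational pair in the general dlt case.
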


\begin{proof}
  By \cite[Thm.~111]{KollarKovacsRP} $(X,D)$ is a rational pair and hence the
  statement follows from \eqref{cor:main-thm-1}.
\end{proof}

\section{Irrational centers}\label{sec:irrational-centers}

\begin{defn}
  Let $\sF$ be a coherent sheaf on a scheme $X$. Then 
  $x\in X$ is an \emph{associated point of $\sF$} if the maximal ideal
  $\frm_{X,x}\subset \sO_{X,x}$ is an associated prime of the module $\sF_x$. In
  other words, $x\in X$ is an associated point of $\sF$ if the maximal ideal
  $\frm_{X,x}\subset \sO_{X,x}$ consists of zero-divisors of the module $\sF_x$.
\end{defn}

Now we are ready to make the definition of the namesake of the present article: 

\begin{defn}\label{def:irrational-centers}
  Let $(X,\DD)$ be a reduced pair and 
  $\phi: (\wt X,\wt \DD)\to (X,\DD)$ a log resolution.  If $x\in X$ is an associated
  point of $\myR^i\phi_*\sO_{\wt X}(-\wt\DD)$ 
  for some $i>0$, then we call $Z=\ol{\{x\}}$, the Zariski closure of $\{x\}$, a
  \emph{relative irrational 
    center of $(X,\DD)$ with respect to $\phi$}.  
  A closed subset $Z\subseteq X$ is  called an \emph{irrational center of
  $(X,\DD)$} if there exists a log resolution  $\phi: (\wt X,\wt \DD)\to (X,\DD)$
  such that $Z$ is a relative irrational center of  $(X,\DD)$ with respect to
  $\phi$ 
  and $Z\subseteq X$ is called an \emph{absolute irrational center} if
  for any log resolution $\phi: (\wt X,\wt \DD)\to (X,\DD)$, $Z$ is a relative
  irrational center of $(X,\DD)$ with respect to $\phi$.
\end{defn}

\begin{rem}
  If $\DD=0$ then these definitions coincide and agree with the one given in
  \cite{AH09}. This can be proven the same way as one proves that having rational
  singularities does not depend on the resolution chosen.
\end{rem}

\section{Depth estimates}

\subsection{Truncated functors and distinguished triangles}

Let $\phi:(\wt X,\wt \DD)\to (X,\DD)$ be a log resolution.

Now define a series of derived category objects recursively as follows.  Let
$\myR^{\geq 0}\phi_*\leteq \myR\phi_*$ and consider the natural transformation
$$
\xymatrix{%
  \phi_* \ar[r] & \myR\phi_*.  }
$$
Then let $\myR^{\geq 1}\phi_*\sO_{\wt X}(-\wt\DD)$ be defined as the object
completing the induced natural morphism to a distinguished triangle\footnote{This is
  essentially the truncated complex of $\myR\phi_*\sO_{\wt X}(-\wt\DD)$.  The shift
  is included to make the subsequent definitions simpler and more balanced.}:
$$
\xymatrix{%
  \phi_*\sO_{\wt X}(-\wt\DD) \ar[r] & \myR\phi_*\sO_{\wt X}(-\wt\DD) \ar[r] &
  \myR^{\geq 1}\phi_*\sO_{\wt X}(-\wt\DD)[-1] \ar[r]^-{+1} & .  }
$$
By construction there exists a natural morphism,
$$
\xymatrix{%
  \myR^1\phi_*\sO_{\wt X}(-\wt\DD) \ar[r] & \myR^{\geq 1}\phi_*\sO_{\wt X}(-\wt\DD), }
$$
and we let $\myR^{\geq 2}\phi_*\sO_{\wt X}(-\wt\DD)$ be defined as the object
completing the above natural morphism to a distinguished triangle and so on to obtain
a series of objects and distinguished triangles for each $p\in\bN$:
$$
\xymatrix{%
  \myR^p\phi_*\sO_{\wt X}(-\wt\DD) \ar[r] & \myR^{\geq p}\phi_*\sO_{\wt X}(-\wt\DD)
  \ar[r] & \myR^{\geq p+1}\phi_*\sO_{\wt X}(-\wt\DD)[-1] \ar[r]^-{+1} & .  }
$$

\subsection{Easy vanishing theorems for $\sExt$ sheaves}

\begin{lem}\label{lem:ext-vanish=p}
  Let $(X,\DD)$ be a pair and $\phi:(\wt X,\wt \DD)\to (X,\DD)$ a log resolution. Let
  $p>0$ be a positive integer and $x\in X$ which is not an associated point of
  $\myR^p\phi_*\sO_{\wt X}(-\wt\DD)$. Then
  $$
  \left(\sExt_X^{j}(\myR^p\phi_*\sO_{\wt X}(-\wt\DD), \omega_X^\mydot)\right)_x=0,
  \quad\text{ for and $j\geq -\dim x$.}
  $$
\end{lem}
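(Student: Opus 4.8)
The plan is to derive the statement directly from the depth--duality criterion \eqref{thm:depth-ext-criterion}, applied to the single coherent sheaf $\sF\leteq\myR^p\phi_*\sO_{\wt X}(-\wt\DD)$. (This is indeed a coherent sheaf: $\phi$ is proper, being a log resolution, and $\sO_{\wt X}(-\wt\DD)$ is coherent, so each $\myR^p\phi_*$ is coherent.) All the real content lies in translating the hypothesis that $x$ is \emph{not} an associated point of $\sF$ into a lower bound on $\depth_x\sF$; once that is in hand, the desired vanishing is read off from the index ranges in \eqref{thm:depth-ext-criterion}.

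First I would dispense with the case $\sF_x=0$: since localization is exact and commutes with $\sHom$, one has $\big(\sExt^{j}_X(\sF,\omega_X^\mydot)\big)_x\simeq\sExt^{j}_{X_x}\big(\sF_x,(\omega_X^\mydot)_x\big)=0$ for every $j$, and there is nothing to prove. So assume $\sF_x\neq 0$. The key step is the standard fact that for a nonzero finitely generated module $M$ over a noetherian local ring $(R,\frm)$ one has $\depth M=0$ if and only if $\frm$ is an associated prime of $M$. By the definition of associated point, $x$ being an associated point of $\sF$ means precisely that $\frm_{X,x}$ is an associated prime of $\sF_x$; hence the hypothesis that it is \emph{not} gives $\depth_x\sF=\depth_{\sO_{X,x}}\sF_x\geq 1$.

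It then remains to feed this bound into \eqref{thm:depth-ext-criterion}. Writing $t=\depth_x\sF+\dim x$, that proposition gives $\big(\sExt^{-i}_X(\sF,\omega_X^\mydot)\big)_x=0$ for all $i<t$. Since $\depth_x\sF\geq 1$ we have $t\geq\dim x+1$, so every integer $i\leq\dim x$ satisfies $i<t$; setting $j=-i$, this is exactly the vanishing of $\big(\sExt^{j}_X(\sF,\omega_X^\mydot)\big)_x$ throughout the range $j\geq-\dim x$, as claimed.

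I do not anticipate a substantive obstacle: the argument is just a dictionary between the two standard descriptions of depth, one via associated primes and one via $\sExt$ against the dualizing complex. The only points needing care are the sign and shift conventions in \eqref{thm:depth-ext-criterion} (so that the range $j\geq-\dim x$ corresponds to $i\leq\dim x$) and the verification that $\sF$ is a genuine coherent sheaf, so that the module-level statement about associated primes applies to the finitely generated $\sO_{X,x}$-module $\sF_x$.
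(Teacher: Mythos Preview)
Your proposal is correct and follows essentially the same approach as the paper: split into the cases $\sF_x=0$ (trivial) and $\sF_x\neq 0$, in the latter translate ``$x$ not an associated point'' into $\depth_x\sF\geq 1$, and read off the vanishing from \eqref{thm:depth-ext-criterion}. The paper's proof is two sentences stating exactly this; your version simply fills in the index bookkeeping and the coherence check.
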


\begin{proof}
  As $x\in X$ is not an associated point of $\myR^p\phi_*\sO_{\wt X}(-\wt\DD)$, it follows
  that either $\left(\myR^p\phi_*\sO_{\wt X}(-\wt\DD)\right)_x=0$ or
  $\depth_x\myR^p\phi_*\sO_{\wt X}(-\wt\DD)\geq 1$. In the latter case the statement
  follows from \eqref{thm:depth-ext-criterion}.
\end{proof}

\begin{cor}\label{cor:ext-vanish-at-least-p}
  Let $(X,\DD)$ be a pair and $\phi:(\wt X,\wt \DD)\to (X,\DD)$ a log resolution. Let
  $p>0$ be a positive integer and $x\in X$ which is not an associated point of
  $\myR^i\phi_*\sO_{\wt X}(-\wt\DD)$ for any $i\geq p$. Then
  $$
  \left(\sExt_X^{j}(\myR^{\geq p}\phi_*\sO_{\wt X}(-\wt\DD),
    \omega_X^\mydot)\right)_x=0, \quad\text{ for $j\geq -\dim x$.}
  $$  
\end{cor}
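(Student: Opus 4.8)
The plan is to prove this by a descending induction on $p$, feeding \eqref{lem:ext-vanish=p} into the defining truncation triangles. Write $\sG\leteq\sO_{\wt X}(-\wt\DD)$ to lighten notation. The construction preceding the statement gives, for each $p$, a distinguished triangle
$$
\myR^p\phi_*\sG \rtarr \myR^{\geq p}\phi_*\sG \rtarr \myR^{\geq p+1}\phi_*\sG[-1] \stackrel{+1}{\rtarr}
$$
whose first term is a single cohomology sheaf and whose third term is the next truncation (shifted). The induction is anchored at the top: since $\phi$ is proper and $\sG$ is coherent, $\myR\phi_*\sG$ is bounded, so there is an $N$ with $\myR^i\phi_*\sG=0$ for $i>N$; reading the triangles from the top down, this forces $\myR^{\geq p}\phi_*\sG=0$ for every $p>N$, and for such $p$ the asserted $\sExt$-vanishing is automatic.

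For the inductive step I would assume the statement for $p+1$ and prove it for $p$. Note first that the hypothesis at level $p$ — that $x$ is not an associated point of $\myR^i\phi_*\sG$ for all $i\geq p$ — supplies exactly what is needed on both ends: the case $i=p$ feeds \eqref{lem:ext-vanish=p}, while the cases $i\geq p+1$ are precisely the hypothesis under which the induction hypothesis applies to $\myR^{\geq p+1}\phi_*\sG$. I would then apply the contravariant functor $\myR\sHom_X(-,\omega_X^\mydot)$ to the triangle above and pass to the long exact sequence of $\sExt$-sheaves. Because this functor reverses the triangle and because the third term carries a $[-1]$, the dualizing shift lands the $\myR^{\geq p+1}$-contribution one degree higher; after localizing at $x$ the relevant three-term piece reads
$$
\big(\sExt_X^{j+1}(\myR^{\geq p+1}\phi_*\sG,\omega_X^\mydot)\big)_x \rtarr \big(\sExt_X^{j}(\myR^{\geq p}\phi_*\sG,\omega_X^\mydot)\big)_x \rtarr \big(\sExt_X^{j}(\myR^{p}\phi_*\sG,\omega_X^\mydot)\big)_x .
$$

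A squeeze then finishes the argument. For $j\geq -\dim x$ the right-hand term vanishes by \eqref{lem:ext-vanish=p} (applied to $\myR^p\phi_*\sG$), and the left-hand term vanishes by the induction hypothesis applied to $\myR^{\geq p+1}\phi_*\sG$, since its index $j+1$ still satisfies $j+1\geq -\dim x$. Exactness forces the middle term to vanish for all $j\geq -\dim x$, which is the claim at level $p$, completing the induction. I do not expect a genuine obstacle here: all the analytic content sits in \eqref{lem:ext-vanish=p} and \eqref{thm:depth-ext-criterion}, and the corollary is a formal dévissage through the truncation triangles. The only point needing care is the shift bookkeeping in the long exact sequence — one must verify that the $\myR^{\geq p+1}$-term appears in degree $j+1$ (not $j$), so that it remains inside the vanishing range; this is harmless because $j+1>j\geq -\dim x$.
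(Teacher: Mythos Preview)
Your proof is correct and follows essentially the same approach as the paper: descending induction on $p$ anchored at large $p$ where the truncation vanishes, then dualizing the truncation triangle and squeezing the middle term between \eqref{lem:ext-vanish=p} and the induction hypothesis. Your version is slightly more explicit about the shift bookkeeping and the base case, but the argument is identical.
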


\begin{proof}
  If $p\gg 0$ then $\myR^{p}\phi_*=\myR^{\geq p}\phi_*=0$, and hence the statement
  holds trivially. Next apply the functor $\myR\sHom_X(\blank, \omega_X^\mydot)$ to
  the distinguished triangle:
  $$
  \xymatrix{%
    \myR^p\phi_*\sO_{\wt X}(-\wt\DD) \ar[r] & \myR^{\geq p}\phi_*\sO_{\wt X}(-\wt\DD)
    \ar[r] & \myR^{\geq p+1}\phi_*\sO_{\wt X}(-\wt\DD)[-1] \ar[r]^-{+1} & }
  $$
  to obtain the long exact sequence,
  \begin{multline*}
    \dots\to \sExt^{j}_X(\myR^{\geq p+1}\phi_*\sO_{\wt X}(-\wt\DD)[-1],
    \omega_X^\mydot) \simeq \sExt^{j+1}_X(\myR^{\geq p+1}\phi_*\sO_{\wt X}(-\wt\DD),
    \omega_X^\mydot) \to \\
    \to \sExt^{j}_X(\myR^{\geq p}\phi_*\sO_{\wt X}(-\wt\DD), \omega_X^\mydot) \to
    \sExt^{j}_X(\myR^p\phi_*\sO_{\wt X}(-\wt\DD) , \omega_X^\mydot) \to \dots.
  \end{multline*}
  Now the statement follows from \eqref{lem:ext-vanish=p} by descending induction on
  $p$.
\end{proof}

\subsection{Main theorem}

\begin{thm}\label{thm:main-theorem}
  Let $(X,\DD)$ be a reduced pair, $\phi:(\wt X,\wt \DD)\to (X,\DD)$ a log
  resolution, and $x\in X$ which is not the general point of a relative irrational
  center of $(X,\DD)$ with respect to $\phi$. Then
  \begin{equation}
    \label{eq:2}
    \left(\sExt_X^{-i}(\phi_*\sO_{\wt X}(-\wt\DD), \omega_X^\mydot)\right)_x=0,
    \quad\text{ for $i<\min ( 3, \codim_X x )+\dim x$,}
  \end{equation}
  in particular
  \begin{equation}
    \label{eq:3}
    \depth_x\phi_*\sO_{\wt X}(-\wt\DD)\geq \min ( 3, \codim_X x ).
  \end{equation}
\end{thm}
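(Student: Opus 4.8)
The plan is to deduce \eqref{eq:3} from \eqref{eq:2} by means of the depth criterion \eqref{thm:depth-ext-criterion}, and to prove \eqref{eq:2} by comparing $\sF\leteq\phi_*\sO_{\wt X}(-\wt\DD)$ with the \emph{full} pushforward $\myR\phi_*\sO_{\wt X}(-\wt\DD)$ through the first of the truncation triangles constructed above. Note first that, unwinding \eqref{def:irrational-centers}, the hypothesis on $x$ says exactly that $x$ is \emph{not} an associated point of $\myR^i\phi_*\sO_{\wt X}(-\wt\DD)$ for any $i>0$; this is precisely the input required by Corollary~\eqref{cor:ext-vanish-at-least-p} with $p=1$.

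The conceptual heart is that the full pushforward dualizes into a single degree. Exactly as in the proof of \eqref{cor:main-thm-1}, Grothendieck duality together with the smoothness of $\wt X$ (so that $\omega_{\wt X}^\mydot\qis\omega_{\wt X}[n]$ with $n=\dim X$) gives
$$
\myR\sHom_X\big(\myR\phi_*\sO_{\wt X}(-\wt\DD),\omega_X^\mydot\big)\qis\myR\phi_*\omega_{\wt X}(\wt\DD)[n],
$$
whence $\sExt^{-i}_X\big(\myR\phi_*\sO_{\wt X}(-\wt\DD),\omega_X^\mydot\big)\simeq\myR^{\,n-i}\phi_*\omega_{\wt X}(\wt\DD)$, which vanishes for every $i\neq n$ by Grauert--Riemenschneider vanishing for pairs \eqref{thm:rel-kv-vanish}. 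I then apply the contravariant functor $\myR\sHom_X(\blank,\omega_X^\mydot)$ to the defining triangle
$$
\phi_*\sO_{\wt X}(-\wt\DD)\to\myR\phi_*\sO_{\wt X}(-\wt\DD)\to\myR^{\geq 1}\phi_*\sO_{\wt X}(-\wt\DD)[-1]\xrightarrow{+1}
$$
and localize the resulting long exact sequence at $x$. Since $\min(3,\codim_X x)+\dim x\leq n$, throughout the range $i<\min(3,\codim_X x)+\dim x$ one has $i\neq n$, so the term coming from the full pushforward vanishes and the sequence yields an injection
$$
\big(\sExt^{-i}_X(\phi_*\sO_{\wt X}(-\wt\DD),\omega_X^\mydot)\big)_x\hookrightarrow\big(\sExt^{-i+1}_X(\myR^{\geq 1}\phi_*\sO_{\wt X}(-\wt\DD)[-1],\omega_X^\mydot)\big)_x.
$$

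It remains to kill the target. Accounting for the $[-1]$, the target is $\big(\sExt^{-i+2}_X(\myR^{\geq 1}\phi_*\sO_{\wt X}(-\wt\DD),\omega_X^\mydot)\big)_x$, and Corollary~\eqref{cor:ext-vanish-at-least-p} (with $p=1$, using the hypothesis on $x$) shows this is zero as soon as $-i+2\geq-\dim x$, i.e. $i\leq\dim x+2$. Both requirements, $i<n$ and $i\leq\dim x+2$, hold throughout $i<\min(3,\codim_X x)+\dim x=\min(\dim x+3,\,n)$, which gives \eqref{eq:2}. Finally \eqref{eq:3} follows: if $\sF_x\neq 0$ then \eqref{thm:depth-ext-criterion} forces $\big(\sExt^{-t}_X(\sF,\omega_X^\mydot)\big)_x\neq 0$ for $t=\depth_x\sF+\dim x$, so the vanishing in \eqref{eq:2} yields $t\geq\min(3,\codim_X x)+\dim x$, that is $\depth_x\sF\geq\min(3,\codim_X x)$ (the case $\sF_x=0$ being trivial).

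The only real bookkeeping lies in tracking the two degree shifts---the one from the $[-1]$ in the truncation triangle and the one from the connecting homomorphism of the long exact sequence---which together promote the depth-$\geq 1$ estimate supplied by Corollary~\eqref{cor:ext-vanish-at-least-p} to the depth-$\geq 3$ bound. The point that makes this possible, namely that the full pushforward contributes nothing below degree $n$ thanks to \eqref{thm:rel-kv-vanish}, is exactly what isolates the higher-direct-image part controlled by the associated-point hypothesis.
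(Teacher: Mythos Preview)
Your proof is correct and follows essentially the same approach as the paper: both apply $\myR\sHom_X(\blank,\omega_X^\mydot)$ to the truncation triangle, kill the term coming from $\myR\phi_*\sO_{\wt X}(-\wt\DD)$ via Grothendieck duality and Grauert--Riemenschneider for pairs \eqref{thm:rel-kv-vanish}, kill the $\myR^{\geq 1}$-term via Corollary~\eqref{cor:ext-vanish-at-least-p}, and then invoke \eqref{thm:depth-ext-criterion}. Your tracking of the two degree shifts and the inequality $\min(3,\codim_Xx)+\dim x\leq n$ is in fact slightly more explicit than the paper's writeup.
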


\begin{proof}
  First observe that $\dim_x\phi_*\sO_{\wt X}(-\wt\DD)=\codim_Xx$, so (\ref{eq:2})
  implies (\ref{eq:3}) by \eqref{thm:depth-ext-criterion}. I will prove (\ref{eq:2}).
  Consider the distinguished triangle:
  $$
  \xymatrix{%
    \phi_*\sO_{\wt X}(-\wt\DD) \ar[r] & \myR\phi_*\sO_{\wt X}(-\wt\DD) \ar[r] &
    \myR^{\geq 1}\phi_*\sO_{\wt X}(-\wt\DD)[-1] \ar[r]^-{+1} & ,  }
  $$
  and the long exact sequence it induces:
  \begin{multline*}
    \dots \to \sExt^{-i}_X(\myR\phi_*\sO_{\wt X}(-\wt\DD), \omega_X^\mydot) \to
    \sExt^{-i}_X(\phi_*\sO_{\wt X}(-\wt\DD) , \omega_X^\mydot) \to \\ \to
    \sExt^{-i+2}_X(\myR^{\geq 1}\phi_*\sO_{\wt X}(-\wt\DD), \omega_X^\mydot) \to
    \dots.
  \end{multline*}  
  If $i<3+\dim x$, then $-i+2\geq -\dim x$ so $\left(\sExt^{-i+2}_X(\myR^{\geq
      1}\phi_*\sO_{\wt X}(-\wt\DD), \omega_X^\mydot)\right)_x=0$ by
  \eqref{cor:ext-vanish-at-least-p} and so it is enough to prove that
  $\left(\sExt^{-i}_X(\myR\phi_*\sO_{\wt X}(-\wt\DD), \omega_X^\mydot)\right)_x=0$
  for $i<\min(3,\codim_Xx)$.

  Let $n=\dim X$ and observe that 
  \begin{multline}
    \sExt^{-i}_X(\myR\phi_*\sO_{\wt X}(-\wt\DD), \omega_X^\mydot) \simeq
    h^{-i}(\myR\sHom_X(\myR\phi_*\sO_{\wt X}(-\wt\DD), \omega_X^\mydot)) \simeq \\
    \simeq h^{-i}(\myR\phi_*\myR\sHom_{\wt X}(\sO_{\wt X}(-\wt\DD), \omega_{\wt
      X}^\mydot)) \simeq h^{-i}(\myR\phi_* \omega_{\wt X}^\mydot(\wt\DD)) \simeq
    \myR^{n-i}\phi_* \omega_{\wt X}(\wt\DD)
  \end{multline}
  by Grothendieck duality and the fact that $\wt X$ is smooth and $\wt \DD$ is a
  Cartier divisor.  Now observe that if $i<n$, then $\left(\myR^{n-i}\phi_*
    \omega_{\wt X}(\wt\DD)\right)_x=0$ by \eqref{thm:rel-kv-vanish}.
\end{proof}

\begin{cor}\label{thm:appl--theorem}
  Let $(X,\DD)$ be a normal pair and $x\in X$ which is not the general point of an
  absolute irrational center of $(X,\DD)$. Then
  $$\depth_x\sO_X(-\DD)\geq \min ( 3, \codim_X x ). $$
\end{cor}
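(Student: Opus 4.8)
The plan is to deduce this from the Main Theorem \eqref{thm:main-theorem} in two moves: first replace $\sO_X(-\DD)$ by $\phi_*\sO_{\wt X}(-\wt\DD)$ for a suitable log resolution $\phi$ using the normality hypothesis, and second arrange that $x$ avoids being the general point of a relative irrational center with respect to that \emph{same} $\phi$. The only thing that must be reconciled is that the normal-pair isomorphism of \eqref{def:normal-pair} and the condition that $x$ is not an irrational center are a priori witnessed by \emph{different} resolutions, so the heart of the argument is to show that the higher direct images, and hence both conditions, do not depend on the chosen resolution.

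First I would isolate the key engine: if $\psi\colon(Y_1,\DD_{Y_1})\to(X,\DD)$ is a log resolution and $\sigma\colon(\wt X,\wt\DD)\to(Y_1,\DD_{Y_1})$ is a further log resolution, with $\phi=\psi\circ\sigma$, then $\myR^i\phi_*\sO_{\wt X}(-\wt\DD)\simeq \myR^i\psi_*\sO_{Y_1}(-\DD_{Y_1})$ for every $i\geq 0$. Indeed, $(Y_1,\DD_{Y_1})$ is snc, and relative Grothendieck duality for $\sigma$ together with Grauert--Riemenschneider vanishing for pairs \eqref{thm:rel-kv-vanish} (with $B=\wt\DD$, so that $\myR^{>0}\sigma_*\omega_{\wt X}(\wt\DD)=0$) shows $\myR\sigma_*\sO_{\wt X}(-\wt\DD)\qis\sO_{Y_1}(-\DD_{Y_1})$; that is, $\sigma$ is a rational resolution of the snc pair, so $\sigma_*\sO_{\wt X}(-\wt\DD)\simeq\sO_{Y_1}(-\DD_{Y_1})$ and $\myR^{>0}\sigma_*\sO_{\wt X}(-\wt\DD)=0$ (compare \eqref{cor:rtl-pairs-0}). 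The Leray spectral sequence for $\psi\circ\sigma$ then degenerates and yields the asserted isomorphisms. In particular the sheaves $\myR^i\phi_*\sO_{\wt X}(-\wt\DD)$, hence their associated points, hence the relative irrational centers of $(X,\DD)$, are independent of the log resolution $\phi$.

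With this in hand the corollary is immediate. Because $(X,\DD)$ is a normal pair, \eqref{def:normal-pair} provides a log resolution $\phi\colon(\wt X,\wt\DD)\to(X,\DD)$ with $\sO_X(-\DD)\simeq\phi_*\sO_{\wt X}(-\wt\DD)$. Because $x$ is not the general point of an absolute irrational center, there is \emph{some} log resolution for which $x$ is not the general point of a relative irrational center; applying the resolution-independence just proved to a common log resolution dominating both that one and $\phi$, we conclude that $x$ is not the general point of a relative irrational center with respect to $\phi$ either, i.e.\ $x$ is not an associated point of $\myR^i\phi_*\sO_{\wt X}(-\wt\DD)$ for any $i>0$. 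Now \eqref{thm:main-theorem} applies to $\phi$ and gives $\depth_x\phi_*\sO_{\wt X}(-\wt\DD)\geq\min(3,\codim_X x)$, and since depth is intrinsic to the sheaf, the isomorphism $\sO_X(-\DD)\simeq\phi_*\sO_{\wt X}(-\wt\DD)$ transports this to $\depth_x\sO_X(-\DD)\geq\min(3,\codim_X x)$.

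The main obstacle is the engine of the second paragraph, namely the resolution-independence of $\myR^i\phi_*\sO_{\wt X}(-\wt\DD)$; everything else is formal. The delicate point is verifying that an arbitrary further log resolution $\sigma$ of the snc pair $(Y_1,\DD_{Y_1})$ really is a rational resolution, i.e.\ that $\myR^{>0}\sigma_*\sO_{\wt X}(-\wt\DD)=0$ and $\sigma_*\sO_{\wt X}(-\wt\DD)=\sO_{Y_1}(-\DD_{Y_1})$. This is exactly where \eqref{thm:rel-kv-vanish} and duality do the work, and where one must keep track that $\wt\DD$ is the strict transform and that the sheaves being compared are reflexive of rank one, so that their agreement in codimension one forces equality.
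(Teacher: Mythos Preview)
Your ``engine'' --- the claimed resolution-independence of the sheaves $\myR^i\phi_*\sO_{\wt X}(-\wt\DD)$ for $i>0$ --- is false, and this is exactly where your argument breaks. Take $(Y_1,\DD_{Y_1})$ to be a smooth surface with $\DD_{Y_1}=\DD_1+\DD_2$ two smooth curves meeting transversally at a point $p$, and let $\sigma\colon\wt X\to Y_1$ be the blow-up of $p$ with exceptional curve $E$. This \emph{is} a log resolution of the snc pair in the sense of the paper (which explicitly allows $(X,\DD)$ itself to be snc). One has $\sigma^*\DD_{Y_1}=\wt\DD+2E$, hence $\sO_{\wt X}(-\wt\DD)\simeq\sigma^*\sO_{Y_1}(-\DD_{Y_1})\otimes\sO_{\wt X}(2E)$, and a direct computation gives $\myR^1\sigma_*\sO_{\wt X}(2E)\simeq\kappa(p)\neq 0$. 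So $\sigma$ is \emph{not} a rational resolution of $(Y_1,\DD_{Y_1})$, and the relative irrational centers genuinely depend on the resolution. The paper says as much in the remark following Definition~\ref{def:rat-sing-pairs}: the definition of rational pair is not resolution-independent, and for a dlt (in particular snc) pair the irrational centers are precisely the non-klt centers, e.g.\ the strata of the snc divisor.

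The flaw in your duality argument is this: Grauert--Riemenschneider for pairs \eqref{thm:rel-kv-vanish} does give $\myR^{>0}\sigma_*\omega_{\wt X}(\wt\DD)=0$, so by duality $\myR\sHom_{Y_1}(\myR\sigma_*\sO_{\wt X}(-\wt\DD),\omega_{Y_1}^\mydot)\qis\sigma_*\omega_{\wt X}(\wt\DD)[n]$ is a single sheaf. But you cannot conclude that $\myR\sigma_*\sO_{\wt X}(-\wt\DD)$ is a sheaf without knowing that $\sigma_*\omega_{\wt X}(\wt\DD)$ is itself locally free (or at least maximal Cohen--Macaulay). In the example, $K_{\wt X}+\wt\DD=\sigma^*(K_{Y_1}+\DD_{Y_1})-E$, so $\sigma_*\omega_{\wt X}(\wt\DD)=\omega_{Y_1}(\DD_{Y_1})\otimes\sI_p$, whose bidual complex $\myR\sHom(\sI_p,\sO_{Y_1})$ has a nonzero $h^1$. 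Your appeal to \eqref{cor:rtl-pairs-0} is circular: that corollary assumes $\phi$ is already a \emph{rational} resolution.

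The paper's proof avoids all of this by reversing your two moves. It first chooses $\phi$ to be a log resolution for which $x$ is \emph{not} the general point of a relative irrational center (such a $\phi$ exists precisely because $x$ is not an absolute irrational center), and then observes that the normality isomorphism $\sO_X(-\DD)\simeq\phi_*\sO_{\wt X}(-\wt\DD)$ holds for \emph{this} $\phi$ as well. Only the $\myR^0$ statement is needed here, and that one \emph{is} resolution-independent: for any further log resolution $\sigma$ of an snc pair $(Y_1,\DD_{Y_1})$ one has $\sigma_*\sO_{\wt X}(-\wt\DD)=\sO_{Y_1}(-\DD_{Y_1})$ by the elementary sandwich $\sO_{Y_1}(-\DD_{Y_1})\subseteq\sigma_*\sO_{\wt X}(-\wt\DD)\subseteq\sO_{Y_1}(-\DD_{Y_1})$ (using $\sigma^*\DD_{Y_1}\geq\wt\DD$ for one inclusion and normality of $Y_1$ for the other). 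With $\phi$ so chosen, \eqref{thm:main-theorem} applies directly.
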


\begin{proof}
  Let $\phi:(\wt X,\wt \DD)\to (X,\DD)$ be a log resolution for which $x\in X$ is not
  the general point of a relative irrational center of $(X,\DD)$ with respect to
  $\phi$. Notice that as $(X,\DD)$ is normal, by definition,
  $$
  \sO_X(-\DD)\simeq \phi_*\sO_{\wt X}(-\wt\DD),
  $$
  so the statement is straightforward from \eqref{thm:main-theorem}.
\end{proof}

\section{Applications to log canonical pairs}

The key point of applying the results of this paper to log canonical pairs is that
absolute irrational centers are non-klt centers.  It is easy to see that the union of
all non-klt centers of a log canonical pair contains the locus where that log
canonical pair is not rational and hence it contains the union of all absolute
irrational centers.  However, I am claiming that there is a closer relationship,
namely that the absolute irrational centers themselves are non-klt centers.

Next we will discuss the key step in applying the theory of irrational centers to log
canonical pairs.  We will use the following slight abuse of notation: For a log
resolution $\phi:(Y,{\Delta_Y})\to (X,\Delta)$ we will denote the log resolution
$\phi:(Y,\rdown{\Delta_Y})\to (X,\rdown\Delta)$ by the same symbol. This makes sense
as $\phi$ really stands for the birational morphism $\phi:Y\to X$ that, as a
morphism, is a priori independent of the choice of boundary divisor.

We will need the following in the proof.

\begin{defn}\label{def:szabo-res}
  Let $(Z,\Theta)$ be a dlt pair.  A log resolution of $(Z,\Theta)$, $g:(Y,
  \Gamma)\to (Z,\Theta)$ is called a \emph{\szabores}, if there exist $A,B$ effective
  $\bQ$-divisors on $Y$ without common irreducible components, such that $\supp
  (A+B)\subset \exc (g)$, $\rdown{A}=0$,
  and
  $$
  K_{Y}+\Gamma \sim_{\bQ} g^*(K_Z+\Theta) - A + B.
  $$
\end{defn}

\begin{rem}
  Every dlt pair admits a \szabores by \cite{MR1322695} (cf.\ \cite[2.44]{KM98}).
\end{rem}

\begin{thm}\label{thm:irrational-is-non-klt}
  Let $(X,\Delta)$ be a log canonical pair.  Then there exists a log resolution
  $\phi:(Y,\Delta_Y)\to (X,\Delta)$ such that every relative irrational center of
  $(X,\rdown\Delta)$ with respect to $\phi$ is the image of a non-klt center of
  $(Y,\Delta_Y)$. In particular, every absolute irrational center of
  $(X,\rdown\Delta)$ is also a non-klt center of $(X,\Delta)$.
\end{thm}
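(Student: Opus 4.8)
The plan is to build the log resolution from a dlt modification followed by a \szabores, to match the associated points of the higher direct images against the log canonical strata of the resolution, and to identify those strata with non-klt places by a discrepancy computation. Since $(X,\Delta)$ is lc it admits a minimal dlt model $\mu\colon(X^{\rm m},\Delta^{\rm m})\to(X,\Delta)$ with $K_{X^{\rm m}}+\Delta^{\rm m}\sim_{\bQ}\mu^*(K_X+\Delta)$, and being dlt it carries a \szabores $g\colon(Y,\Gamma)\to(X^{\rm m},\Delta^{\rm m})$ as in \eqref{def:szabo-res}. I set $\phi=\mu\circ g$ and use this as the log resolution, so that $\Delta_Y=\phi^{-1}_*\Delta$ and $\rdown{\Delta_Y}=\phi^{-1}_*\rdown{\Delta}$ has snc support. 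Composing the two relations gives $K_Y+\Gamma\sim_{\bQ}\phi^*(K_X+\Delta)-A+B$ with $A,B\geq0$ exceptional, sharing no components, $\rdown{A}=0$ and $\supp(A+B)\subseteq\exc(g)$. The point of this choice is that the non-klt centers of $(Y,\Delta_Y)$ are exactly the log canonical strata of the reduced snc divisor $\rdown{\Delta_Y}$, while the displayed formula together with $\rdown{A}=0$ lets me verify by a direct discrepancy computation that the image of each such stratum is a non-klt center of $(X,\Delta)$. This gives the final ``in particular'' clause and reduces the theorem to the claim that every relative irrational center is the image of one of these strata.

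For the reduction to a vanishing statement, write $\sG=\phi_*\omega_Y(\rdown{\Delta_Y})$. The Grothendieck-duality computation from the proof of \eqref{thm:main-theorem} identifies $\myR\sHom_X(\myR\phi_*\sO_Y(-\rdown{\Delta_Y}),\omega_X^\mydot)$ with $\sG[\dim X]$; applying $\myR\sHom_X(-,\omega_X^\mydot)$ once more yields $\myR^i\phi_*\sO_Y(-\rdown{\Delta_Y})\simeq\sExt^{\,i-\dim X}_X(\sG,\omega_X^\mydot)$. Thus $x$ is the generic point of a relative irrational center exactly when it is an associated point of one of these $\sExt$ sheaves for some $i>0$, and by \eqref{thm:depth-ext-criterion} this is governed by the points where the depth of $\sG$ drops. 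I would then show that the depth of $\sG$ can drop only along images of log canonical strata of $\rdown{\Delta_Y}$: away from those images the relevant higher direct images vanish by \eqref{thm:rel-kv-vanish} and \eqref{lem:GR}, so $\sG$ is Cohen--Macaulay there and contributes no associated points.

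The vanishing mechanism itself proceeds by localizing at the generic point of a subvariety $Z\subseteq X$ that is \emph{not} the image of a log canonical stratum mapping generically finitely to it. Using the stratification exact sequences of the snc divisor $\rdown{\Delta_Y}$, I would split $\sO_Y(-\rdown{\Delta_Y})$ into pieces governed by $\omega_W$ for the various strata $W$ and apply \eqref{lem:GR}: because $Z$ is not such an image, there is a $\phi$-nef and $\phi$-big $\bQ$-divisor that is $\phi$-big on all the log canonical centers relevant near $Z$, and the lemma then forces the corresponding higher direct images to vanish near $Z$. The positivity hypothesis of \eqref{lem:GR} fails only along images of strata that map generically finitely to their image---precisely the non-klt centers---so these are the only surviving associated points.

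The main obstacle is the control of associated points that are not generic points of components of the support, that is, ruling out spurious embedded associated primes and, in particular, showing that the exceptional contributions (the divisors introduced by $\mu$ and the $-A+B$ correction, which enter $\Gamma$ but not $\rdown{\Delta_Y}$) produce no associated points off the strata of $\rdown{\Delta_Y}$. A crude support estimate only places the irrational centers inside the non-klt locus; upgrading this to ``each irrational center is itself a non-klt center'' is exactly where the sharper depth/$\sExt$ analysis is needed, and I would carry it out with the truncated functors $\myR^{\geq p}\phi_*$ introduced above together with \eqref{cor:ext-vanish-at-least-p}, bootstrapping from the associated points of the individual $\myR^i\phi_*\sO_Y(-\rdown{\Delta_Y})$ to the $\sExt$-vanishing in the required range. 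The flexibility of the \szabores---the freedom in $-A+B$ subject to $\rdown{A}=0$---is what makes the positivity in \eqref{lem:GR} available on every relevant stratum, and I expect verifying that positivity, and hence the absence of extra associated primes, to be the most delicate point.
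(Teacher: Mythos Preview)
Your setup---minimal dlt model followed by a \szabores, composed to give $\phi$---is exactly the paper's. The divergence begins immediately after, and the gap is real.

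The paper does not dualize to $\sG=\phi_*\omega_Y(\rdown{\Delta_Y})$ and study its depth. Instead it introduces an auxiliary effective $\eta$-exceptional divisor $B_\varepsilon=B+\varepsilon(\eta^*E-E_Y)$ and proves two things: first, by relative Kawamata--Viehweg over $Z$ (where the boundary has coefficients $<1$) together with the dlt rationality of $(Z,\rdown{\Delta_Z})$, that
\[
\myR\phi_*\sO_Y(-\rdown{\Delta_Y})\ \qis\ \myR\psi_*\sO_Z(-\rdown{\Delta_Z})\ \qis\ \myR\phi_*\sO_Y\big(\rup{B_\varepsilon}-\rdown{\Delta_Y}\big);
\]
second, that $\rup{B_\varepsilon}-\rdown{\Delta_Y}$ can be written as $K_Y+(\text{snc boundary with reduced part }E_Y)-\phi^*(K_X+\Delta)$. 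The point of the twist is to put the line bundle into the exact shape required by the Koll\'ar--Ambro--Fujino \emph{torsion-freeness} theorem \cite[2.39]{Fujinobook}, which then says directly that every associated prime of $\myR^i\phi_*\sO_Y(\rup{B_\varepsilon}-\rdown{\Delta_Y})$ is the image of a stratum of $E_Y$, hence a non-klt center of $(X,\Delta)$.

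Your route cannot reach this conclusion. The identification $\myR^i\phi_*\sO_Y(-\rdown{\Delta_Y})\simeq\sExt^{\,i-\dim X}_X(\sG,\omega_X^\mydot)$ is correct, and showing $\sG$ is CM off the non-klt locus would indeed force these $\sExt$ sheaves to be \emph{supported} on the union of non-klt centers. But ``supported on a union of non-klt centers'' is strictly weaker than ``every associated point is the generic point of a non-klt center'': an embedded associated prime could sit at a point of the non-klt locus that is not itself the generic point of any center. You flag this as the main obstacle, but your proposed fix---the truncated functors and \eqref{cor:ext-vanish-at-least-p}---runs in the wrong direction: that corollary \emph{assumes} $x$ is not an associated point and deduces $\sExt$-vanishing, whereas here you need to start from non-vanishing and pin down $x$. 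Likewise, \eqref{lem:GR} only yields vanishing, hence again only support control; it has no mechanism to rule out embedded primes inside the support. The missing ingredient is precisely a torsion-freeness (injectivity) theorem for higher direct images, and the $B_\varepsilon$ twist is what makes it applicable.
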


\begin{proof}
  First let $\psi:(Z,\Delta_Z+E) \to (X,\Delta)$ be a minimal dlt model
  \cite[3.1]{KK10}, where $\Delta_Z=\psi_*^{-1}\Delta$ and $E=\exc(\psi)$. By the
  definition of the minimal dlt model $Z$ is $\bQ$-factorial and
  \begin{equation}
    \label{eq:6}
    K_Z + \Delta_Z+E  \sim_{\bQ} \psi^*(K_X + \Delta).
  \end{equation}
  Next let $\eta: (Y,\Delta_Y+E_Y)\to (Z,\Delta_Z+E)$ be a \szabores
  \eqref{def:szabo-res}, where $\Delta_Y= \eta_*^{-1}\Delta_Z$ and $E_Y=
  \eta_*^{-1}E$. Then there are $A,B$ effective $\bQ$-divisors on $Y$ without common
  irreducible components, such that $\supp (A+B)\subset \exc (\eta)$, $\rdown{A}=0$,
  and
  \begin{equation}
    \label{eq:5}
    K_{Y}+\Delta_Y+E_Y \sim_{\bQ} \eta^*(K_Z+\Delta_Z+E) - A + B.
  \end{equation}
  Let $B_\varepsilon\leteq B + \varepsilon(\eta^*E-E_Y)$ where $0<\varepsilon \ll 1$.
  It follows that
  \begin{multline}
    \label{eq:1}
    \rup {B_\varepsilon} - \rdown{\Delta_Y} \sim_{\bQ} K_Y+ \{\Delta_Y\}+
    (1-\varepsilon)E_Y + A + \{-B_\varepsilon\} + \\ 
    -\eta^*(K_Z+\Delta_Z+(1-\varepsilon)E),
  \end{multline}
  and then the relative Kawamata-Viehweg vanishing theorem implies that
  \begin{equation*}
    \myR^i\eta_*\sO(\rup{B_\varepsilon} -\rdown{\Delta_Y}) =0 \quad \text{ for $i>0$.}
  \end{equation*}
  As $B_\varepsilon$ is still an effective $\eta$-exceptional divisor, it follows
  that $$\eta_*\sO_Y(\rup {B_\varepsilon} -\rdown{\Delta_Y})\simeq
  \sO_Z(-\rdown{\Delta_Z}),$$ cf.\ \eqref{prop:normal-reduced-is-a-normal-pair}, and
  hence
  \begin{equation}
    \label{eq:4}
    \myR\eta_*\sO_Y(\rup {B_\varepsilon} -\rdown{\Delta_Y}) \qis
    \sO_Z(-\rdown{\Delta_Z}). 
  \end{equation}

  \noindent
  Clearly, $\phi=\psi\circ\eta$ is a log resolution of $(X,\Delta)$ and $\Delta_Y=
  \phi_*^{-1}\Delta$.  Then from (\ref{eq:6}) and (\ref{eq:1}) we obtain
  \begin{equation}
    \label{eq:9}
    \rup {B_\varepsilon} 
    - \rdown{\Delta_Y} \sim_{\bQ} K_Y+ \{\Delta_Y\}+ E_Y + A + \{-B_\varepsilon\} 
    + \varepsilon(\eta^*E-E_Y) 
    -\phi^*(K_X+\Delta).
  \end{equation}
  Note that as opposed to (\ref{eq:1}), here the boundary relative to $\phi$ may have
  components with coefficient $1$, namely $E_Y$, so we cannot apply the relative
  Kawamata-Viehweg vanishing theorem with respect to $\phi$.

  On the other hand, notice that since $(Z,\Delta_Z+E)$ is $\mathbb Q$-factorial and
  dlt, so is $(Z,\Delta_Z)$ and then by \cite[2.1]{Kovacs10a}
  $\myR^i\eta_*\sO_Y(-\rdown{\Delta_Y})=0$ for $i>0$. Therefore, using (\ref{eq:4}),
  we obtain that
  \begin{multline*}
    \myR\phi_*\sO_Y(-\rdown{\Delta_Y}) \qis
    \myR\psi_*\myR\eta_*\sO_Y(-\rdown{\Delta_Y}) \qis
    \myR\psi_*\sO_Z(-\rdown{\Delta_Z}) \qis \\ \qis \myR\psi_*\myR\eta_*\sO_Y(\rup
    {B_\varepsilon} -\rdown{\Delta_Y}) \qis \myR\phi_*\sO_Y(\rup {B_\varepsilon}
    -\rdown{\Delta_Y}).
  \end{multline*}
  Next use Koll\'ar-Ambro torsion freeness: Applying \cite[2.39]{Fujinobook} to
  (\ref{eq:9}) we obtain that the closure of any associated point of
  $$\myR^i\phi_*\sO_Y(-\rdown{\Delta_Y}) \simeq\myR^i\phi_*\sO_Y(\rup{B_\varepsilon}
  -\rdown{\Delta_Y})$$ for some $i>0$ is a non-klt center of $(X, \Delta)$.
  Similarly, the closure of an associated point of
  $\myR^i\phi_*\omega_{Y}(\rdown{\Delta_Y})$ for some $i>0$ is a non-klt center of
  $(X, \Delta)$ by \cite[2.39]{Fujinobook}.
\end{proof}

As indicated in the introduction we obtain the following corollary:

\begin{cor}\label{cor:lc}
  Let $(X,\Delta)$ be a log canonical pair and $x\in X$ which is not the general
  point of a non-klt center of $(X,\Delta)$. Then
  $$\depth_x\sO_X(-\rdown{\Delta})\geq \min ( 3, \codim_X x ). $$
\end{cor}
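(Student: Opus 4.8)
The plan is to obtain this as a packaging of Corollary~\eqref{thm:appl--theorem} and Theorem~\eqref{thm:irrational-is-non-klt}, the minimal dlt model and resolution analysis having already been absorbed into the latter. First I would check that $(X,\rdown\Delta)$ is a legitimate input. Since $(X,\Delta)$ is log canonical, $X$ is normal and every coefficient of $\Delta$ lies in $[0,1]$, so $\rdown\Delta$ is a reduced integral divisor and $(X,\rdown\Delta)$ is a reduced pair. For any log resolution $\phi\colon(Y,\Delta_Y)\to(X,\Delta)$ one has $\rdown{\Delta_Y}=\phi^{-1}_*\rdown\Delta$, which has no $\phi$-exceptional components; as $X$ is normal we have $\phi_*\sO_Y=\sO_X$, and a local computation of valuations then gives $\phi_*\sO_Y(-\rdown{\Delta_Y})\simeq\sO_X(-\rdown\Delta)$. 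Thus $(X,\rdown\Delta)$ is a normal pair in the sense of \eqref{def:normal-pair}.

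Next I would translate the hypothesis on $x$. By Theorem~\eqref{thm:irrational-is-non-klt} every absolute irrational center of $(X,\rdown\Delta)$ is a non-klt center of $(X,\Delta)$. Applying this to the subvariety $\overline{\{x\}}$ in contrapositive form: the assumption that $x$ is not the general point of a non-klt center says precisely that $\overline{\{x\}}$ is not a non-klt center of $(X,\Delta)$, whence $\overline{\{x\}}$ is not an absolute irrational center of $(X,\rdown\Delta)$, i.e.\ $x$ is not the general point of an absolute irrational center. Corollary~\eqref{thm:appl--theorem}, applied to the normal pair $(X,\rdown\Delta)$ and this point $x$, now yields $\depth_x\sO_X(-\rdown\Delta)\geq\min(3,\codim_X x)$, as claimed.

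If one prefers to avoid the absolute notion, the same argument runs through \eqref{thm:main-theorem}: Theorem~\eqref{thm:irrational-is-non-klt} furnishes a specific log resolution $\phi$ for which every relative irrational center of $(X,\rdown\Delta)$ with respect to $\phi$ is a non-klt center of $(X,\Delta)$, so the hypothesis forces $x$ not to be the general point of a relative irrational center for that $\phi$, and \eqref{thm:main-theorem} then bounds $\depth_x\phi_*\sO_Y(-\rdown{\Delta_Y})$, which the identification above rewrites in terms of $\sO_X(-\rdown\Delta)$. Since all the genuine work was done in proving \eqref{thm:irrational-is-non-klt}, no essential obstacle remains; the only points that require care are bookkeeping ones—applying the implication of \eqref{thm:irrational-is-non-klt} in the correct direction and to $\overline{\{x\}}$ rather than to $x$, and verifying the normal-pair identification $\phi_*\sO_Y(-\rdown{\Delta_Y})\simeq\sO_X(-\rdown\Delta)$, which is exactly where normality of $X$ and the absence of exceptional components in $\rdown{\Delta_Y}$ enter.
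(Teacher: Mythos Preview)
Your proposal is correct, and your second route is exactly the paper's proof: invoke Theorem~\ref{thm:irrational-is-non-klt} to obtain a log resolution $\phi$ for which $x$ is not the general point of a relative irrational center, apply Theorem~\ref{thm:main-theorem}, and then identify $\phi_*\sO_Y(-\rdown{\Delta_Y})\simeq\sO_X(-\rdown\Delta)$ (the paper cites \eqref{prop:normal-reduced-is-a-normal-pair} for this last step, whereas you verify it directly from normality of $X$, which is arguably cleaner). Your first route, through the absolute notion and Corollary~\ref{thm:appl--theorem}, is an equally valid minor repackaging of the same ingredients.
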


\begin{proof}
  By \eqref{thm:irrational-is-non-klt} there exists a log resolution
  $\phi:(Y,\Delta_Y)\to (X,\Delta)$ such that $x$ is not a relative irrational center
  of $(X,\rdown\Delta)$ with respect to $\phi$. Then
  $$\depth_x\phi_*\sO_Y(-\rdown{\Delta_Y})\geq \min ( 3, \codim_X x )$$ by
  \eqref{thm:main-theorem} and $\sO_{X}(-\rdown{\Delta})\simeq
  \phi_*\sO_{Y}(-\rdown{\Delta_Y})$ by \eqref{prop:normal-reduced-is-a-normal-pair}.
\end{proof}

\begin{ack}
  I would like to thank Valery Alexeev, Zsolt Patakfalvi, and Karl Schwede for
  helpful comments.
\end{ack}


\def\cprime{$'$} \def\polhk#1{\setbox0=\hbox{#1}{\ooalign{\hidewidth
  \lower1.5ex\hbox{`}\hidewidth\crcr\unhbox0}}} \def\cprime{$'$}
  \def\cprime{$'$} \def\cprime{$'$} \def\cprime{$'$}
  \def\polhk#1{\setbox0=\hbox{#1}{\ooalign{\hidewidth
  \lower1.5ex\hbox{`}\hidewidth\crcr\unhbox0}}} \def\cdprime{$''$}
  \def\cprime{$'$} \def\cprime{$'$} \def\cprime{$'$} \def\cprime{$'$}
\providecommand{\bysame}{\leavevmode\hbox to3em{\hrulefill}\thinspace}
\providecommand{\MR}{\relax\ifhmode\unskip\space\fi MR}
\providecommand{\MRhref}[2]{%
  \href{http://www.ams.org/mathscinet-getitem?mr=#1}{#2}
}
\providecommand{\href}[2]{#2}

\end{document}